\numberwithin{equation}{section}
\newtheorem{theorem}{Theorem}[section]
\newtheorem{lemma}[theorem]{Lemma}
\newtheorem{cor}[theorem]{Corollary}
\theoremstyle{definition}
\newtheorem{definition}[theorem]{Definition}
\newtheorem{remark}[theorem]{Remark}
\newcommand{\Z}{\mathbb{Z}}
\newcommand{\be}{\begin{eqnarray*}}
\newcommand{\ee}{\end{eqnarray*}}
\newcommand{\beq}{\begin{equation}}
\newcommand{\eeq}{\end{equation}}
\begin{document}

\title[Weighted estimates on filtered measure spaces]
{Two-weighted estimates for positive operators and Doob maximal operators on filtered measure spaces}

\authors

\author[W. Chen]{Wei Chen}
\address{Wei Chen \\ School of Mathematical Sciences,
Yangzhou University, 225002 Yangzhou, China}
\email{weichen@yzu.edu.cn}

\author[C. X. Zhu]{Chunxiang Zhu}
\address{Chunxiang Zhu \\ School of Mathematical Sciences,
Yangzhou University, 225002 Yangzhou, China}
\email{cxzhu\_yzu@163.com}

\author[Y. H. Zuo]{Yahui Zuo}
\address{Yahui Zuo \\ School of Mathematics and Statistics,
Central South University, Changsha 410075, China}
\email{zuoyahui@csu.edu.cn}

\author[Y. Jiao]{Yong Jiao}
\address{Yong Jiao \\ School of Mathematics and Statistics,
Central South University, Changsha 410075, China}
\email{jiaoyong@csu.edu.cn}

\thanks{The research of W. Chen is supported by the National Natural Science Foundation of China(11771379), the Natural Science Foundation of Jiangsu Province(BK20161326), and the Jiangsu Government Scholarship for Overseas Studies(JS-2017-228).
The research of Y. Jiao is supported by the National Natural
Science Foundation of China (Grant No. 11471337).}

\keywords{Martingale, Positive operator, Doob maximal operator, Weighted inequality.}
\subjclass[2010]{Primary 60G46; Secondary 60G42}

%
%
\begin{abstract} We characterize strong type and weak type inequalities with two weights for positive operators on filtered measure spaces. These estimates are probabilistic analogues of two-weight inequalities for positive operators associated to the dyadic cubes in $\mathbb R^n$ due to Lacey, Sawyer and Uriarte-Tuero \cite{LaSaUr}.
Several mixed bounds for the
Doob maximal operator on filtered measure spaces are also obtained.
In fact, Hyt\"{o}nen-P\'{e}rez type and Lerner-Moen type
norm estimates for Doob maximal operator are established. Our approaches are mainly based on the construction
of principal sets.
\end{abstract}

\maketitle

%
%

 \section{Introduction }
The theory of weighted inequalities in harmonic analysis is an old subject,
which can probably be traced back to the beginning of integration. The
$A_p$ condition first appeared in a paper of Rosenblum \cite{Rose},
but systematic investigation was initiated by \cite{Mu}, \cite{CoFe} and \cite{MuWh} etc. The $A_p$ condition
is geometric, meaning to only involve the weights and not the operators.
Later, Sawyer \cite{Sa1} introduced the test condition $S_p$ and characterized the two-weight estimates for the
classical Hardy-Littlewood maximal operator. The testing condition essentially amounts
to testing the uniform estimates on characteristic functions of dyadic cubes. In addition,
Sawyer \cite{Sa2} proved that for operators such as fractional integrals, Poisson kernels, and other
nonnegative kernels, the two-weight estimate still holds if one assumes the testing condition
not only on the operator itself, but also on its formal adjoint (see \cite{GaRu} and \cite{Gra} for more information).

Dyadic Harmonic Analysis can be traced back to the early
years of the 20th century, and Haar's basis of orthogonal functions has
profound and still useful connections to combinatorial and probabilistic reasoning.
This subject has recently
acquired a renewed attention by Stefanie Petermichl \cite{Pe}, that a notion of Haar
shifts can be used to recover deep results about the Hilbert transform (see \cite{NaTrVo} and \cite{La} for more information).
As is well known, to get sharp one-weight estimates of usual operators in classical harmonic analysis,
a standard way is a dyadic discretization technique.
Using it,
Hyt\"onen \cite{Hy3} gave the solution of the $A_2$ conjecture, which states that any Calder\'on-Zygmund operator satisfies the following bound on weighted Lebesgue spaces:
\begin{equation}\label{A2}\|T\|_{L^p(w)}\lesssim  [w]_{A_p}^{\max(1,\frac{1}{p-1})}.\end{equation}
Its simpler proofs were found by several authors
(see \cite{HyLaPe,Le2}) and inequality \eqref{A2} has seen several improvements (see \cite{LM, HL2011, HyPe, L2011}). These improvements come in the form of the so-called mixed estimates.  The idea behind the mixed estimates is that one only needs the full strength of the $A_p$ constant for part of the estimates, while the other part only requires something weaker. The smaller quantities come in the form of $A_r$ constants for large $r$ or $A_\infty$ constants. The dyadic discretization technique is also valid for (linear) positive operators (see \cite{LaMoPeTo,LaSaUr,Ka1,Ka2,Tr}) and the (fractional) maximal operator (see \cite{Bu,Sa1,Le1,LaMoPeTo,HyKa,HyPe}).

With the development of weighted theory in harmonic analysis, its probabilistic counterpart was also studied.
This is weighted theory on martingale spaces. The history of martingale theory goes back to the early 1950s when Doob \cite{Doob}
pointed out the connection between martingales and analytic functions.
Standard introductions to martingale theory can be found in Dellacherie and Meyer \cite{DeMe}, Doob \cite{Doob0}, Kazamaki \cite{Ka}, Long \cite{Lo}, Neveu \cite{Neveu}, Weisz \cite{Weisz1994} and Williams \cite{Williams}. Recently, Schilling \cite{Sc} and Stroock \cite{St} developed martingale theory
for $\sigma$-finite measure spaces rather than just for probability spaces,
so that they are immediately applicable to analysis on the Euclidean space
$\mathbb R^n$ without the need of auxiliary truncations or decompositions into probability
spaces.
Doob's maximal operator,
which is a generalization of the dyadic Hardy-Littlewood maximal operator,
and a martingale transform,
which is an analogue of a singular integral in classical harmonic analysis,
are important tools in stochastic analysis.
For Doob's maximal operator, assuming some regularity condition on $A_p$ weights, one-weight inequality
was studied first by Izumisawa and Kazamaki \cite{IzKa}.
The added property is superfluous (see Jawerth \cite{Ja} or Long \cite{Lo}).
Two-weight weak inequalities were studied by Uchiyama \cite{Uc} and Long \cite{Lo}, and
two-weight strong inequalities were studied by Long and Peng \cite{LoPe} and Chang \cite{Chang}.
Weighted inequalities involving Carleson measure for generalized Doob's maximal operator were obtained by Chen and Liu \cite{ChLi}.

In martingale theory, as we see above, weighted inequalities first appeared in 1970s, but they have
been developing slowly. One reason is that some decomposition theorems and
covering theorems which depend on algebraic structure and topological structure
are invalid on probability space. Recently, there are two new approaches to weighted theory in martingale spaces.
One is very closely related to Burkholder's method (see \cite{Burkholder}). This is the so-called Bellman's method, which also
rests on the construction of an appropriate special function. The technique has
been used very intensively mostly in analysis, in the study of Carleson embedding
theorems, BMO estimates, square function inequalities, bounds for maximal operators,
estimates for weights and many other related results. For more complete references, we
refer to the bibliographies of \cite{treil}.
In martingale spaces, this theory was further developed
in a series of papers by Ba\~{n}uelos and Os\c{e}kowski (see, e.g.,\cite{BaOs1,BaOs2,BaOs3}) and a monograph \cite{Ose} by Os\c{e}kowski.
The other is the construction of principal sets on filtered measure
spaces which is a quadruplet $(\Omega,\mathcal{F},\mu; (\mathcal{F}_i)_{i\in Z}).$
The germ of principal sets appeared as the sparse family on $\mathbb R^n$ (see \cite{HyPe, Damian} for more information) and the principal sets were successfully constructed on filtered measure spaces in \cite[p.942-943]{TT}.
Using the construction, Tanaka and Terasawa \cite{TT1} obtained a characterization for the boundedness of positive operators on
filtered measure spaces. In addition, the construction was reinvestigated by Chen and Jiao \cite{ChJi} and a new property of the construction was found (see Section \ref{constru}, P.\ref{P.3}).

The purpose of this paper is to develop a theory of weights for
positive operators and Doob maximal operators
on filtered measure spaces. To better explain our aim, we first recall the main results of \cite{LaSaUr}. Let $\nu=\{\nu_Q:Q\in \mathcal Q\}$ be non-negative constants associated to dyadic cubes, and define a positive linear operator by
$$T_\nu f=\sum_{Q\in \mathcal Q}\nu_Q\mathbb E_Qf\cdot \chi_Q,$$
where $\mathbb E_Qf:=|Q|^{-1}\int_Qf dx.$ Let $\sigma,w$ be non-negative locally integral weights on $\mathbb R^n.$ Lacey, Sawyer and Uriarte-Tuero \cite[Theorem 1.11]{LaSaUr} characterize the two-weight strong type inequalities
\begin{equation}\label{1}
\|T_\nu(f\sigma)\|_{L^q(w)}\lesssim\|f\|_{L^p(\sigma)},\quad 1< p\leq q<\infty,
\end{equation}
in term of Sawyer-type testing conditions. In the present paper, we consider the positive operator $T_\alpha(\cdot\,\sigma)$ (see Subsection \ref{pre1} for the definition) on filtered measure spaces which is the generalization of positive dyadic operator $T_\nu(\cdot\,\sigma)$

 The following theorem is our first main result,
which characterizes two-weight strong type inequality for positive operators on filtered measure spaces. Let $p'$ be the conjugate exponent number of $1<p<\infty.$ All other unexplained notations can be found in Section \ref{Pre} and Section \ref{constru}.

\begin{theorem}\label{Th-posi}Let $1<p\leq q<\infty.$ Let $\omega\in A_1$ and $\sigma\in A_1.$ Then the following statements are equivalent:
\begin{enumerate}[\rm(1)]
  \item \label{thms1} There exists a positive constant $C$ such that
\begin{equation}\label{thm s}
\|T_{\alpha}(f\sigma,g\omega)\|_{L^1(d\mu)}\leq C\|f\|_{L^p(\sigma)}\|g\|_{L^{q'}(\omega)};
\end{equation}
  \item \label{thms2} There exist positive constants $C_1$ and $C_2$ such that
for any $E\in \mathcal{F}_i^0, i\in \mathbb Z,$
\begin{eqnarray}
&&   \label{thmq s1} \Big(\int_E\Big(\sum_{j\geq i}\mathbb E_j(\sigma)\alpha_j\Big)^q\omega d\mu\Big)^{\frac{1}{q}}\leq C_1\sigma(E)^{\frac{1}{p}}, \\
&&   \label{thmq s2}\Big(\int_E\Big(\sum_{j\geq i}\mathbb E_j(\omega)\alpha_j\Big)^{p'}\sigma d\mu\Big)^{\frac{1}{p'}}\leq C_2\omega(E)^{\frac{1}{q'}}.
\end{eqnarray}
\end{enumerate}
Moreover, we denote the smallest constants $C,$ $C_1$ and $C_2$ in $\eqref{thm s},$  $\eqref{thmq s1}$ and $\eqref{thmq s2}$
by $\|T_\alpha(\cdot\sigma)\|,$ $[\omega, \sigma]_{\alpha,q',p'}$ and $[\sigma,\omega]_{\alpha,p,q}$, respectively. Then it follows that
$[\omega, \sigma]_{\alpha,q',p'}\leq\|T_\alpha(\cdot\sigma)\|,$ $[\sigma,\omega]_{\alpha,p,q}\leq\|T_\alpha(\cdot\sigma)\|,$
and $$\|T_\alpha(\cdot\sigma)\|\lesssim[\omega, \sigma]_{\alpha,q',p'}[\omega]_{A_1}
+[\sigma,\omega]_{\alpha,p,q}[\sigma]_{A_1}.$$
\end{theorem}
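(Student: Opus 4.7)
The proof splits into the two implications. The direction $(\ref{thms1})\Rightarrow(\ref{thms2})$ is the necessity side and is obtained by testing the bilinear inequality (\ref{thm s}) on indicator functions of sets from the filtration. The direction $(\ref{thms2})\Rightarrow(\ref{thms1})$ is the substantive one and will use the principal-set construction recalled in Section \ref{constru} together with the $A_1$ hypotheses on $\omega$ and $\sigma$.

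For necessity, fix $E\in\mathcal{F}_i^0$ and take $f=\chi_E$ in (\ref{thm s}). Since $\chi_E$ is $\mathcal{F}_j$-measurable for every $j\geq i$, the pull-out property of conditional expectation gives $\mathbb{E}_j(\chi_E\sigma)=\chi_E\mathbb{E}_j(\sigma)$. Combined with the positivity of the discarded terms corresponding to $j<i$, this produces the chain
$$\int_E\Big(\sum_{j\geq i}\alpha_j\mathbb{E}_j(\sigma)\Big) g\omega\,d\mu\leq\|T_\alpha(\chi_E\sigma,g\omega)\|_{L^1(d\mu)}\leq\|T_\alpha(\cdot\sigma)\|\,\sigma(E)^{1/p}\|g\|_{L^{q'}(\omega)}$$
for every $g\in L^{q'}(\omega)$. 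Taking the supremum over such $g$ with $\|g\|_{L^{q'}(\omega)}\leq 1$ and invoking the duality $(L^q(\omega))^*=L^{q'}(\omega)$ yields (\ref{thmq s1}) with $[\omega,\sigma]_{\alpha,q',p'}\leq\|T_\alpha(\cdot\sigma)\|$. The second testing condition (\ref{thmq s2}) follows analogously: because $\alpha_j$ is $\mathcal{F}_j$-measurable and conditional expectation is self-dual, the bilinear form is symmetric in its two slots, so one may repeat the argument with the roles of $(f,\sigma,p)$ and $(g,\omega,q')$ interchanged.

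For sufficiency, after reducing to nonnegative simple $f$ and $g$ by density, the plan is to dominate
$$\Lambda(f,g):=\int\Big(\sum_{j}\alpha_j\mathbb{E}_j(f\sigma)\Big)g\omega\,d\mu$$
by a sparse-type expression using the principal sets of Section \ref{constru}. Building a principal family $\{F^k\}$ for $f$ adapted to $\sigma$ stratifies the filtration into generations on which the ratio $\mathbb{E}_j(f\sigma)/\mathbb{E}_j(\sigma)$ is controlled by a fixed doubling factor. On each stratum, the factor $\mathbb{E}_j(f\sigma)$ is replaced by the corresponding $\sigma$-average constant on $F^k$, leaving $\sum_j\alpha_j\mathbb{E}_j(\sigma)$ to be estimated against $g\omega$ via (\ref{thmq s1}); this contributes a term comparable to $[\omega,\sigma]_{\alpha,q',p'}\sigma(F^k)^{1/p}\omega(F^k)^{1/q'}$ to that stratum. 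Summing over $k$ by the Carleson packing property of the principal family, together with H\"older in the sequence space, produces a bound of order $[\omega,\sigma]_{\alpha,q',p'}\|f\|_{L^p(\sigma)}\|g\|_{L^{q'}(\omega)}$, and the factor $[\omega]_{A_1}$ enters precisely when $\mathbb{E}_j(\omega)$ must be replaced by $\omega$ pointwise to match the form of (\ref{thmq s1}). A symmetric argument, building the principal family for $g$ adapted to $\omega$ and invoking (\ref{thmq s2}), contributes the summand $[\sigma,\omega]_{\alpha,p,q}[\sigma]_{A_1}\|f\|_{L^p(\sigma)}\|g\|_{L^{q'}(\omega)}$. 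A dichotomy at each generation---deciding whether $\mathbb{E}_j(f\sigma)$ or $\mathbb{E}_j(g\omega)$ is the dominant quantity relative to the average of its associated weight---selects which of the two families handles each piece, and this is what turns the otherwise multiplicative interaction into the stated additive mixed bound.

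The principal technical obstacle is orchestrating the two principal-set decompositions so that the two regimes tile the bilinear form without overlap, and verifying that the $A_1$ hypotheses are invoked at exactly the step where the passage from $\mathbb{E}_j$-averages to the weights themselves is needed. A secondary subtlety is ensuring that the Carleson packing estimate remains compatible with the mismatched exponents $p\leq q$, so that the sparse-type summation is carried out in $\ell^p(\sigma)$ or $\ell^{q'}(\omega)$ rather than in a self-dual sequence space.
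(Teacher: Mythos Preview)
Your necessity argument $(\ref{thms1})\Rightarrow(\ref{thms2})$ is correct and matches the paper's treatment.

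For sufficiency, your architectural instincts are right---principal sets, a dichotomy between the $f$-dominated and $g$-dominated regimes, and the $A_1$ hypothesis used exactly to pass from $\mathbb{E}_j(\omega)$ to $\omega$---but the specific mechanism you describe has a gap. You write that on each stratum one obtains a contribution comparable to $[\omega,\sigma]_{\alpha,q',p'}\,\sigma(F^k)^{1/p}\omega(F^k)^{1/q'}$, and that H\"older in the sequence space then sums these. This does not work as stated: after freezing $\mathbb{E}_j(f\sigma)\approx 2^{\mathcal{K}_2(P)}\mathbb{E}_j(\sigma)$ and applying the testing condition with H\"older, the residual factor is $\|g\|_{L^{q'}(\omega;P)}$, not $\omega(P)^{1/q'}$, and since the principal sets $P$ are nested rather than disjoint, there is no Carleson packing for $\sum_P\|g\|_{L^{q'}(\omega;P)}^{q'}$.

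The paper resolves this by making the dichotomy pointwise at each level $j$, not at each generation of principal sets: it sets $F_j:=\{\mathbb{E}^{\omega}_j(g)^{q'}\omega\leq\mathbb{E}^{\sigma}_j(f)^{p}\sigma\}$ and $G_j:=\Omega\setminus F_j$. On the $F_j$-part, after the $A_1$ replacement $\mathbb{E}_j(\omega)\leq[\omega]_{A_1}\omega$ (this is precisely where $F_j\notin\mathcal{F}_j$ forces the $A_1$ hypothesis), one applies H\"older to get the $L^{q'}(\omega)$-norm of $\sup_j\chi_{F_j}\mathbb{E}^\omega_j(g)$ over $P$. The defining inequality of $F_j$ then converts this $g$-integral into an $f$-integral, so the entire stratum is bounded by $\big(2^{p\mathcal{K}_2(P)}\sigma(P)\big)^{\theta}$ with $\theta=\tfrac1p+\tfrac1{q'}$---no dependence on $g$ at all. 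Lemma \ref{lemma_Carleson} then sums this to $\|f\|_{L^p(\sigma)}^{p\theta}$, and the product form $\|f\|_{L^p(\sigma)}\|g\|_{L^{q'}(\omega)}$ is recovered at the very end by homogeneity. Your sketch is missing this conversion step and the homogeneity reduction; without them the summation over principal sets does not close.
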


\begin{remark}\label{dual}It is clear that $\|T_{\alpha}(f\sigma,g\omega)\|_{L^1(d\mu)}=\int_{\Omega}\sum_{i\in\Z}\alpha_i\mathbb E_i(f\sigma)\mathbb E_i(g\omega)d\mu.$ Then $$\int_{\Omega}\sum_{i\in\Z}\alpha_i\mathbb E_i(f\sigma)\mathbb E_i(g\omega)d\mu
=\sum_{i\in\Z}\int_{\Omega}\alpha_i\mathbb E_i(f\sigma)\mathbb E_i(g\omega)d\mu
=\sum_{i\in\Z}\int_{\Omega}\alpha_i\mathbb E_i(f\sigma)\mathbb (g\omega)d\mu.$$
It follows that $$\sum_{i\in\Z}\int_{\Omega}\alpha_i\mathbb E_i(f\sigma)\mathbb (g\omega)d\mu
=\int_{\Omega}\sum_{i\in\Z}\alpha_i\mathbb E_i(f\sigma)\mathbb (g\omega)d\mu.$$
Thus $\|T_{\alpha}(f\sigma,g\omega)\|_{L^1(d\mu)}=\int_{\Omega}T_{\alpha}(f\sigma)\mathbb (g\omega)d\mu.$
\end{remark}

Since Remark \ref{dual} and $L^{q}(\omega)-L^{q'}(\omega)$ duality,
the first statement of Theorem \ref{Th-posi} is equivalent to the fact that
the positive operator $T_\alpha(\cdot\sigma)$ is bounded from $L^p(\sigma)$ to $L^{q}(\omega),$ which extends the inequality \eqref{1}. Moreover, in the very special case that $\sigma=1,$ Theorem \ref{Th-posi}  partially improves Tanaka and Terasawa \cite[Theorem 1.1]{TT}. Indeed, as pointed out in \cite[p. 923]{TT}, the expected conditions are \eqref{thmq s1} and \eqref{thmq s2}. However, for some technical reasons, instead of the condition \eqref{thmq s1}, they postulate a strong condition (see \cite[(1.5)]{TT} or Remark \ref{strong condition} below).

Recall that Lacey, Sawyer and Uriarte-Tuero \cite[Theorem 1.11]{LaSaUr} studied two-weight inequalities for positive operator associated to the dyadic cubes in $\mathbb R^n$. As is well known, they obtained two characterizations for the boundedness of the positive operator, which were the local one and global one. Treil \cite{Tr} reinvestigated strong type inequality and obtained a short proof for the part involving the local one. For more information and references, see Tanaka and Terasawa \cite{TT1}.
The arguments in \cite{LaSaUr} and \cite{Tr} are related to dyadic technique extensively, so they are invalid in filtered measure spaces. Instead of dyadic technique,
our method is mainly based on the construction of principal sets (see Section \ref{constru}).

\begin{remark}{\label{strong condition}} Let $\alpha_i$, $i\in\Z$, be a
nonnegative bounded $\mathcal{F}_i$-measurable function and
$\overline{{{\alpha}}}_i\in\mathcal{L}^{+}$,
where
$\overline{{{\alpha}}}_i:=\sum_{j\geq i}\alpha_j$. Assuming that
\begin{equation}\label{TT1.5}
\mathbb{E}_i\overline{{{\alpha}}}_i\approx\overline{{{\alpha}}}_i,
\end{equation}
holds, \cite[Theorem 1.1]{TT} showed that \eqref{thmq s2} implies \eqref{thm s} in the special case $\sigma=1$.
\end{remark}

As a corollary of Theorem \ref{Th-posi}, we have the following one-weight estimate.
\begin{cor}\label{cro-posi}Let $1<p\leq q<\infty.$ Then the following statements are equivalent:
\begin{enumerate}[\rm(1)]
  \item \label{cors1} There exists a positive constant $C$ such that
\begin{equation}\label{cor s}
\|T_{\alpha}(f\omega,g\omega)\|_{L^1(d\mu)}\leq C\|f\|_{L^p(\omega)}\|g\|_{L^{q'}(\omega)};
\end{equation}
  \item \label{cors2} There exist positive constants $C_1$ and $C_2$ such that
for any $E\in \mathcal{F}_i^0, i\in \mathbb Z,$
\begin{eqnarray}
&&   \label{corq s1} \Big(\int_E\Big(\sum_{j\geq i}\mathbb E_j(\omega)\alpha_j\Big)^q\omega d\mu\Big)^{\frac{1}{q}}\leq C_1\omega(E)^{\frac{1}{p}}, \\
&&   \label{corq s2}\Big(\int_E\Big(\sum_{j\geq i}\mathbb E_j(\omega)\alpha_j\Big)^{p'}\omega d\mu\Big)^{\frac{1}{p'}}\leq C_2\omega(E)^{\frac{1}{q'}}.
\end{eqnarray}
\end{enumerate}
Moreover, we denote the smallest constants $C,$ $C_1$ and $C_2$ in $\eqref{cor s},$  $\eqref{corq s1}$ and $\eqref{corq s2}$
by $\|T_\alpha(\cdot\sigma)\|,$ $[\omega, \omega]_{\alpha,q',p'}$ and $[\omega,\omega]_{\alpha,p,q}$, respectively. Then it follows that
$[\omega, \omega]_{\alpha,q',p'}\leq\|T_\alpha(\cdot\sigma)\|,$ $[\omega,\omega]_{\alpha,p,q}\leq\|T_\alpha(\cdot\sigma)\|,$
and $$\|T_\alpha(\cdot\sigma)\|\lesssim[\omega, \omega]_{\alpha,q',p'}
+[\omega,\omega]_{\alpha,p,q}.$$
\end{cor}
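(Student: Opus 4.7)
The plan is to obtain the corollary by specializing Theorem \ref{Th-posi} to $\sigma=\omega$, handling the two implications in standard ways.

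For $(1)\Rightarrow(2)$, I would use test-function arguments. Combining Remark \ref{dual} with $L^q(\omega)$-$L^{q'}(\omega)$ duality, condition (\ref{cor s}) is equivalent to the boundedness of $T_\alpha(\cdot\omega)\colon L^p(\omega)\to L^q(\omega)$. Fix $E\in\mathcal{F}_i^0$ and test on $f=\chi_E$: since $\chi_E$ is $\mathcal{F}_j$-measurable for every $j\geq i$, we can pull it out of the conditional expectation to get $\mathbb E_j(\chi_E\omega)=\chi_E\mathbb E_j(\omega)$, so
\[
T_\alpha(\chi_E\omega)\;\geq\;\chi_E\sum_{j\geq i}\alpha_j\mathbb E_j(\omega).
\]
Taking $L^q(\omega)$ norms yields (\ref{corq s1}) with $C_1\leq\|T_\alpha(\cdot\omega)\|$. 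The companion inequality (\ref{corq s2}) follows by applying the same argument to the adjoint, because Remark \ref{dual} gives the identity $\int T_\alpha(f\omega)\,g\omega\,d\mu=\int f\omega\,T_\alpha(g\omega)\,d\mu$, i.e.\ $T_\alpha(\cdot\omega)$ is self-adjoint with respect to the pairing $(f,g)\mapsto\int fg\,\omega\,d\mu$; hence the $L^p(\omega)\to L^q(\omega)$ bound is equivalent to the $L^{q'}(\omega)\to L^{p'}(\omega)$ bound, and testing the latter gives (\ref{corq s2}).

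For $(2)\Rightarrow(1)$, I would apply Theorem \ref{Th-posi} with $\sigma$ replaced by $\omega$: the testing conditions (\ref{thmq s1}) and (\ref{thmq s2}) coincide with (\ref{corq s1}) and (\ref{corq s2}), and the $L^1$ inequality (\ref{thm s}) becomes (\ref{cor s}), so the qualitative equivalence of the two statements is immediate. The main obstacle is quantitative. A black-box invocation of Theorem \ref{Th-posi} produces
\[
\|T_\alpha(\cdot\omega)\|\;\lesssim\;\big([\omega,\omega]_{\alpha,q',p'}+[\omega,\omega]_{\alpha,p,q}\big)[\omega]_{A_1},
\]
carrying an extra $[\omega]_{A_1}$ factor not present in the corollary's claim. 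To remove it, I would revisit the principal-sets argument underlying Theorem \ref{Th-posi} (see Section \ref{constru}) and verify that the $A_1$ hypothesis is invoked only at the step where one must change from $\sigma$-measure to $\omega$-measure (or vice versa) on principal sets in order to couple the two genuinely distinct weights; in the symmetric case $\sigma=\omega$ such a change is trivial, so the corresponding terms are absorbed directly into the testing constants and the $[\omega]_{A_1}$ factor drops out, giving the clean bound asserted in the corollary.
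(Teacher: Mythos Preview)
Your overall plan---re-run the proof of Theorem~\ref{Th-posi} with $\sigma=\omega$ and check that the $A_1$ step becomes unnecessary---is also the paper's plan. But two points need correction.

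First, the black-box application of Theorem~\ref{Th-posi} does not even give the qualitative equivalence: Theorem~\ref{Th-posi} \emph{assumes} $\omega,\sigma\in A_1$, whereas Corollary~\ref{cro-posi} places no such hypothesis on $\omega$. So for general $\omega$ there is nothing to invoke, and the entire content lies in re-examining the proof.

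Second, and more importantly, your diagnosis of where and why $A_1$ enters is off. The $A_1$ bound is not used to ``change from $\sigma$-measure to $\omega$-measure''; it is used at the single line~\eqref{whyA1} to replace $\mathbb E_j(\omega)$ by $\omega$ pointwise. As the paper notes just before the proof of Theorem~\ref{Th-posi}, one cannot make this replacement via the defining property of conditional expectation because the set $F_j=\{\mathbb E^{\omega}_j(g)^{q'}\omega\le\mathbb E^{\sigma}_j(f)^{p}\sigma\}$ involves the raw weights $\omega,\sigma$ and is therefore not $\mathcal F_j$-measurable. Merely setting $\sigma=\omega$ does not by itself trivialise this step: you would still have to pass from $\mathbb E_j(\omega)\,d\mu$ to $\omega\,d\mu$.

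The actual mechanism---and this is the paper's entire proof of the corollary---is that when $\sigma=\omega$ the weights cancel in the definition of $F_j$, leaving
\[
F_j=\{\mathbb E^{\omega}_j(g)^{q'}\le \mathbb E^{\omega}_j(f)^{p}\},
\]
which \emph{is} $\mathcal F_j$-measurable. With this $F_j$, every factor in the integrand at~\eqref{whyA1} except the final $\mathbb E_j(\omega)$ is $\mathcal F_j$-measurable, so $\int(\cdots)\mathbb E_j(\omega)\,d\mu=\int(\cdots)\omega\,d\mu$ directly from the definition of conditional expectation, and no $A_1$ bound is needed. The remainder of the argument is unchanged. This is precisely the modification \eqref{changeto} that the paper writes down.
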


If $\omega=1,$  then Corollary \ref{cro-posi} reduces to the following, which is the main result of \cite[Theorem 1.2]{TT1}.

\begin{cor}\label{Thm no}Let $1<p\leq q<\infty.$ Then the following statements are equivalent:
\begin{enumerate}[\rm(1)]
  \item There exists a positive constant $C$ such that
\be
\|T_{\alpha}(f,g)\|_{L^1(d\mu)}\leq C\|f\|_{L^p(d\mu)}\|g\|_{L^{q'}(d\mu)};
\ee
  \item There exists a positive constant $C$ such that
for any $E\in \mathcal{F}_i^0, i\in \mathbb Z,$
\be\left\{
  \begin{array}{ll}
    \Big(\int_E\Big(\sum_{j\geq i}\alpha_i\Big)^qd\mu\Big)^{\frac{1}{q}}\leq C\mu(E)^{\frac{1}{p}}, \\
    \Big(\int_E\Big(\sum_{j\geq i}\alpha_i\Big)^{p'}d\mu\Big)^{\frac{1}{p'}}\leq C\mu(E)^{\frac{1}{q'}}.
  \end{array}
\right.\ee
\end{enumerate}
\end{cor}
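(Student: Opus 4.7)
The plan is to obtain Corollary \ref{Thm no} as an immediate specialization of Corollary \ref{cro-posi} to the weight $\omega\equiv 1$. First I would check that the constant function $\omega\equiv 1$ is admissible in Corollary \ref{cro-posi}, i.e.\ lies in $A_1$: since $\mathbb{E}_i(1)=1$ pointwise, one has $[1]_{A_1}=1$, so the hypothesis needed to invoke Corollary \ref{cro-posi} is satisfied trivially, and one may freely feed the choice $\omega=1$ into both statements of that corollary.

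Next I would substitute $\omega=1$ into each ingredient and read off the result. The inner sum $\sum_{j\geq i}\mathbb{E}_j(\omega)\alpha_j$ collapses to $\sum_{j\geq i}\alpha_j$, which is exactly the form appearing in Corollary \ref{Thm no}. The weighted measure $\omega(E)$ reduces to $\mu(E)$, so the right-hand sides of the two testing conditions become $C\mu(E)^{1/p}$ and $C\mu(E)^{1/q'}$. The norm $\|T_{\alpha}(f\omega,g\omega)\|_{L^1(d\mu)}$ coincides with $\|T_{\alpha}(f,g)\|_{L^1(d\mu)}$, and the weighted Lebesgue norms $\|f\|_{L^p(\omega)}$ and $\|g\|_{L^{q'}(\omega)}$ become their unweighted counterparts $\|f\|_{L^p(d\mu)}$ and $\|g\|_{L^{q'}(d\mu)}$. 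After these substitutions the two statements of Corollary \ref{cro-posi} become verbatim the two statements of Corollary \ref{Thm no}, while the quantitative bound $\|T_\alpha(\cdot\omega)\|\lesssim [\omega,\omega]_{\alpha,q',p'}+[\omega,\omega]_{\alpha,p,q}$ simplifies to the implicit bound $\|T_\alpha\|\lesssim C_1+C_2$ in the statement.

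There is no real obstacle in this argument: the substantive analytic work — the construction of the principal sets and the decomposition of $T_{\alpha}(f\sigma,g\omega)$ relative to them — has already been carried out in the proof of Theorem \ref{Th-posi}, of which Corollary \ref{cro-posi} is itself a direct $\sigma=\omega$ specialization. One could equivalently bypass Corollary \ref{cro-posi} and deduce Corollary \ref{Thm no} straight from Theorem \ref{Th-posi} by setting $\sigma=\omega=1$; the two routes produce the same conclusion, and recover the Tanaka--Terasawa result \cite[Theorem 1.2]{TT1} as an unweighted limit of our two-weight theory.
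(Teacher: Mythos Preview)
Your approach is correct and matches the paper's own derivation: Corollary~\ref{Thm no} is obtained by setting $\omega=1$ in Corollary~\ref{cro-posi}, exactly as stated just before the corollary. One small point: Corollary~\ref{cro-posi} carries no $A_1$ hypothesis (its proof replaces the sets $F_j$ of \eqref{change} by the $\mathcal{F}_j$-measurable sets \eqref{changeto}, which bypasses the step \eqref{whyA1}), so your $A_1$ verification for $\omega\equiv 1$ is harmless but unnecessary.
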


Our second main result is two-weight weak type inequalities for positive operators in a filtered measure space, which is corresponding to \cite[Theorem 1.8]{LaSaUr}.

\begin{theorem}\label{Th-w-posi}Let $1<p\leq q<\infty.$ Then the following statements are equivalent:
\begin{enumerate}[\rm(1)]
  \item \label{thm i} There exists a positive constant $C$ such that
\begin{equation}
\label{T bound}\|T_{\alpha}(f\sigma)\|_{L^{q,\infty}(\omega)}\leq C\|f\|_{L^p(\sigma)};
\end{equation}
  \item \label{thm ii} There exists a positive constant $C$ such that
for any $E\in \mathcal{F}_i^0, i\in \mathbb Z,$
\begin{equation}
\label{Test}\Big(\int_E\Big(\sum_{j\geq i}\mathbb E_j(\omega)\alpha_j\Big)^{p'}\sigma d\mu\Big)^{\frac{1}{p'}}\leq C\omega(E)^{\frac{1}{q'}}.\end{equation}
\end{enumerate}
Moreover, we denote the smallest constants $C$ in $\eqref{T bound}$ and $\eqref{Test}$
by $\|T_\alpha(\cdot\sigma)\|$ and $[\sigma,\omega]_{\alpha,p,q}$, respectively. Then it follows that
$[\sigma,\omega]_{\alpha,p,q}\leq\|T_\alpha(\cdot\sigma)\|\lesssim[\sigma,\omega]_{\alpha,p,q}.$ \end{theorem}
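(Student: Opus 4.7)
The proof splits naturally into the easy necessity $(\ref{thm i})\Rightarrow(\ref{thm ii})$ and the main sufficiency $(\ref{thm ii})\Rightarrow(\ref{thm i})$.

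For necessity, the plan is first to upgrade the weak-type bound to a bilinear form: since $q>1$, a layer-cake splitting $\int_0^{\lambda_0}+\int_{\lambda_0}^{\infty}$ followed by optimization in $\lambda_0$ yields
\begin{equation*}
\int_F T_\alpha(f\sigma)\,\omega\,d\mu \lesssim \|T_\alpha(\cdot\sigma)\|\,\omega(F)^{1/q'}\,\|f\|_{L^p(\sigma)}
\end{equation*}
for every $F$ with $\omega(F)<\infty$. Then fix $E\in\mathcal{F}_i^0$, set $\psi_i:=\sum_{j\geq i}\mathbb{E}_j(\omega)\alpha_j$, and test the bilinear estimate with $f=\psi_i^{p'-1}\chi_E$ and $F=E$. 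Remark \ref{dual} gives
\begin{equation*}
\int_E T_\alpha(f\sigma)\omega\,d\mu=\sum_{j\in\mathbb{Z}}\int\alpha_j\,\mathbb{E}_j(f\sigma)\,\mathbb{E}_j(\chi_E\omega)\,d\mu.
\end{equation*}
For $j\geq i$, the condition $E\in\mathcal{F}_j^0$ allows $\chi_E$ to be pulled out of both conditional expectations; discarding the nonnegative $j<i$ terms and using $\mathbb{E}_j$-adjointness yields
\begin{equation*}
\int_E T_\alpha(f\sigma)\omega\,d\mu\geq\int_E\psi_i^{p'}\sigma\,d\mu.
\end{equation*}
Since $\|f\|_{L^p(\sigma)}^p=\int_E\psi_i^{p'}\sigma\,d\mu$, rearrangement produces \eqref{Test} and the constant bound $[\sigma,\omega]_{\alpha,p,q}\leq\|T_\alpha(\cdot\sigma)\|$.

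For sufficiency, fix $f\geq 0$ with $\|f\|_{L^p(\sigma)}<\infty$; the aim is to prove
\begin{equation*}
\omega(\{T_\alpha(f\sigma)>\lambda\})\lesssim\lambda^{-q}[\sigma,\omega]_{\alpha,p,q}^q\|f\|_{L^p(\sigma)}^q
\end{equation*}
for every $\lambda>0$. After a standard dyadic discretization of the averages $\mathbb{E}_j(f\sigma)$, the plan is to apply the principal set construction of Section \ref{constru} with the Chen-Jiao refinement, producing a $\mu$-almost disjoint family $\{F_k\}\subset\bigcup_i\mathcal{F}_i^0$ that covers the level set up to a null set and satisfies $\mathbb{E}_{i_k}(f\sigma)\approx 2^{j_k}$ on each $F_k\in\mathcal{F}_{i_k}^0$. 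On a single principal set, the local contribution to $\omega(F_k)$ is bounded by pairing \eqref{Test} against a dual function tailored to $f\sigma$; summing over $k$, the Chen-Jiao disjointness aggregates the local $\sigma$-integrals of $f^p$ into $\|f\|_{L^p(\sigma)}^p$, and the hypothesis $p\leq q$ combined with H\"older's inequality collapses the dyadic scales $2^{j_k}$ into the required $\lambda^{-q}$ factor, yielding $\|T_\alpha(\cdot\sigma)\|\lesssim[\sigma,\omega]_{\alpha,p,q}$.

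The main obstacle is this final summation. Unlike the strong-type Theorem \ref{Th-posi}, only the single testing condition \eqref{Test} is available, so there is no dual hypothesis to absorb the cross-terms that arise when $T_\alpha(f\sigma)$ is split across scales. The resolution is to exploit the weak-type slack---one is estimating only a level-set measure, not a full $L^q(\omega)$ norm---and trade the missing dual condition for the geometric decay of $\omega$-measure across successive generations of principal sets afforded by the Chen-Jiao refinement. It is precisely this decay that makes \eqref{Test} alone sufficient to close the estimate.
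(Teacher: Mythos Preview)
Your necessity argument $(\ref{thm i})\Rightarrow(\ref{thm ii})$ is fine and essentially matches the paper's; the paper just invokes the Lorentz duality $L^{q,\infty}(\omega)^{*}\supset L^{q',1}(\omega)$ directly and tests with $f=\chi_E$, which is the abstract version of your layer-cake splitting plus optimization.

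The sufficiency direction, however, is where your proposal diverges from the paper and runs into a real gap. The paper does \emph{not} use the principal-set construction of Section~\ref{constru} here. Instead it defines the stopping time
\[
\tau=\inf\{n:\textstyle\sum_{j\le n}\alpha_j\mathbb{E}_j(f\sigma)>\lambda\},
\]
decomposes $\{T_\alpha(f\sigma)>2\lambda\}$ over the level sets $\{\tau=n\}$, and runs a good-$\lambda$ argument with a parameter $\eta$. The key step is the duality identity
\[
\int_{\{\tau=n\}}T^n(f\sigma)\,\omega\,d\mu=\int_{\{\tau=n\}}T^n(\omega\chi_{\{\tau=n\}})\,f\sigma\,d\mu,
\]
after which H\"older and the testing condition \eqref{Test} applied with $E=\{\tau=n\}\in\mathcal{F}_n^0$ control the right-hand side; an optimization in $\eta$ closes the estimate.

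Your alternative plan via principal sets has two concrete problems. First, the construction in Section~\ref{constru} is built from a function $h$ and produces sets on which $\mathbb{E}_{\mathcal{K}_1(P)}(h)$ is dyadically constant; it gives a stratification of the level sets of the \emph{Doob maximal} of $h$, not of $T_\alpha(h)$. You have not explained how your family $\{F_k\}$ is supposed to cover $\{T_\alpha(f\sigma)>\lambda\}$, and there is no obvious way to do so without reintroducing a stopping time on the partial sums, which is precisely the paper's device. Second, and more seriously, your closing mechanism relies on ``geometric decay of $\omega$-measure across successive generations of principal sets.'' The Chen--Jiao property P.\ref{P.3} is $\chi_P\le 2\,\mathbb{E}(\chi_{E(P)}\,|\,\mathcal{F}_{\mathcal{K}_1(P)})$, which is sparseness with respect to $\mu$; it implies $\mu(P)\le 2\mu(E(P))$ but says nothing about $\omega(P)$ versus $\omega(E(P))$ unless $\omega$ satisfies an $A_\infty$-type condition, which is not assumed in Theorem~\ref{Th-w-posi}. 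Without $\omega$-sparseness the summation you sketch does not converge, and the single testing condition \eqref{Test} cannot be made to close the argument along these lines.
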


We now turn to the Doob maximal operator. We prove several mixed $A_p$-$A_{\infty}$ bounds on filtered measure spaces. They
 are Hyt\"{o}nen-P\'{e}rez type and Lerner-Moen type norm estimates; see \cite{HyPe} and \cite{LM}.
\begin{theorem} \label{thm-es}Let $1<p<\infty.$
\begin{enumerate}[\rm(1)]
                 \item \label{Bound1}If $(v,\omega)\in B_p,$ then $\|M\|_{L^p(v)\rightarrow L^p(\omega)}\lesssim[v,\omega]_{B_p}^{\frac{1}{p}};$
                 \item \label{Bound2}If $(v,\omega)\in A_p$ and $\sigma:=\omega^{-\frac{1}{p-1}}\in A^*_{\infty},$ then $\|M\|_{L^p(v)\rightarrow L^p(\omega)}\lesssim[v,\omega]_{A_p}^{\frac{1}{p}}[\sigma]_{A^*_{\infty}}^{\frac{1}{p}};$
                 \item\label{Bound3}If $(\omega)\in A_p$ and $\sigma=\omega^{-\frac{1}{p-1}},$ then $\|M\|_{L^p(\omega)\rightarrow L^p(\omega)}\lesssim[\sigma]_{(A_{p'})^{\frac{1}{p'}}(A^*_{\infty})^{\frac{1}{p}}}\Big(1+\log_2[\omega]_{A_p}\Big)^{\frac{1}{p}}.$
\end{enumerate}
\end{theorem}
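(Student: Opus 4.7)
The plan is to reduce all three estimates to the principal set machinery from Section \ref{constru}, mirroring the strategy that proved Theorem \ref{Th-posi}. First I would linearize the Doob maximal operator by choosing, for each $x\in\Omega$, an index $\tau(x)\in\mathbb Z$ that realises (up to a factor of two) the supremum in $Mf(x)=\sup_i \mathbb E_i|f|$, and by duality reduce to controlling $\int Mf\cdot g\omega\,d\mu$ against $g\in L^{p'}(\omega)$ with $\|g\|_{L^{p'}(\omega)}=1$. The principal set construction on the filtered measure space $(\Omega,\mathcal F,\mu;(\mathcal F_i))$ then produces disjoint families $\{E_i^k\}\subset\mathcal F_i^0$ with controlled shadows, collapsing the bilinear form into a model sum $\sum_{i,k}\mathbb E_i(f)\,\mathbb E_i(g\omega)\,\mu(E_i^k)$, on which the three bounds are read off by applying the appropriate characteristic. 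For part \eqref{Bound1} the hypothesis $(v,\omega)\in B_p$ is precisely a Sawyer-type testing condition on $M$, so it plugs directly into each principal summand and sums via the quasi-disjointness of the $E_i^k$ to $[v,\omega]_{B_p}\|f\|_{L^p(v)}^p$.

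For part \eqref{Bound2} only the geometric $A_p$ condition is available. I would apply H\"older inside $\mathbb E_i(f)$ to split it as a product of an $L^p(v)$-factor and a dual-weight factor; then the inequality $\mathbb E_i(\omega)\mathbb E_i(\sigma)^{p-1}\le[v,\omega]_{A_p}$ (with $\sigma=\omega^{-1/(p-1)}$) exchanges $\omega$-mass for $\sigma$-mass on each principal set up to the factor $[v,\omega]_{A_p}$. The remaining sum is a Carleson packing of the shape $\sum_{i,k}\sigma(E_i^k)$, and here the Fujii--Wilson characteristic $[\sigma]_{A^*_\infty}$ is what makes it finite: the new property of the principal set construction noted in Section \ref{constru} (cf.\ \cite{ChJi}) converts such a packing into a constant multiple of $[\sigma]_{A^*_\infty}\sigma(\bigcup_{i,k}E_i^k)$, producing the mixed constant $[v,\omega]_{A_p}[\sigma]_{A^*_\infty}$ before the $1/p$ root is taken.

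For part \eqref{Bound3} I would combine the method of \eqref{Bound2} with a logarithmic stratification: split the principal indices according to the dyadic size of the local ratio $\mathbb E_i(\omega)\mathbb E_i(\sigma)^{p-1}\in[2^m,2^{m+1})$, which is bounded by $[\omega]_{A_p}$, so that only $O(\log_2[\omega]_{A_p})$ strata are non-trivial; on each stratum the Lerner--Moen joint characteristic $[\sigma]_{(A_{p'})^{1/p'}(A^*_\infty)^{1/p}}$ furnishes a uniform bound through the same H\"older/packing mechanism as in \eqref{Bound2}, and summing over strata contributes the final $(1+\log_2[\omega]_{A_p})^{1/p}$. The main obstacle will be executing the $A^*_\infty$ Carleson packing in the abstract filtered setting, since the principal sets have no geometric ``children'' in the sense available on $\mathbb R^n$; the sparsity estimate must instead be extracted from the stopping-time structure via the new property of principal sets mentioned in Section \ref{constru}. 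Once that packing lemma is in place, the remaining bookkeeping for all three parts follows the templates of \cite{HyPe} and \cite{LM}.
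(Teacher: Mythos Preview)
Your overall route diverges from the paper in an important structural way, and part \eqref{Bound1} of your plan rests on a misreading of the hypothesis.

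First, the structural point. The paper does \emph{not} linearize $M$ and pass to a bilinear form $\int Mf\cdot g\omega\,d\mu$. Instead it invokes Lemma~\ref{lemma_Sp} (the $S^*_p$ characterisation) to reduce all three statements to the single testing estimate
\[
\int_{P_0}{^*M_i}(\sigma\chi_{P_0})^{p}\,v\,d\mu \lesssim C\,\sigma(P_0),\qquad P_0\in\mathcal F_i^0,
\]
with $C$ equal to $[v,\omega]_{B_p}$, $[v,\omega]_{A_p}[\sigma]_{A^*_\infty}$, or $[\sigma]^p_{(A_{p'})^{1/p'}(A^*_\infty)^{1/p}}(1+\log_2[\omega]_{A_p})$ respectively. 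The principal sets are then built with the \emph{fixed} function $h=\sigma$, not with an arbitrary $f$. This replaces your linearisation/duality step by a lemma already available in the paper, and it dispenses with the auxiliary function $g$ altogether. All three parts then become direct manipulations of $\mathbb E(\sigma|\mathcal F_{\mathcal K_1(P)})^p\mathbb E(v|\mathcal F_{\mathcal K_1(P)})$ on each principal set $P$, with property~P.\ref{P.3} (conditional sparsity) doing exactly the job you anticipated.

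Second, the gap in part \eqref{Bound1}. You describe $(v,\omega)\in B_p$ as ``precisely a Sawyer-type testing condition on $M$'' that ``plugs directly into each principal summand''. That is not what $B_p$ is: by Definition~\ref{definition Bp} it is the \emph{pointwise} geometric-mean condition
\[
\mathbb E_i(v)\,\mathbb E_i(\sigma)^p\le C\exp\bigl(\mathbb E_i(\log\sigma)\bigr),
\]
with no integral or test function involved. The Sawyer testing condition is $S^*_p$ (Definition~\ref{definition Sp}), which is the \emph{conclusion} one is trying to verify, not the hypothesis. In the paper's argument the $B_p$ condition is used on each $P$ to convert $\mathbb E(\sigma|\mathcal F_{\mathcal K_1(P)})^p\mathbb E(v|\mathcal F_{\mathcal K_1(P)})$ into $[v,\omega]_{B_p}\exp(\mathbb E(\log\sigma|\mathcal F_{\mathcal K_1(P)}))$; one then needs the additional step (Jensen for conditional expectations) $\exp(\mathbb E(\log\sigma|\mathcal F_{\mathcal K_1(P)}))\le \mathbb E(\sigma^{1/q}|\mathcal F_{\mathcal K_1(P)})^q\le M(\sigma^{1/q}\chi_{P_0})^q$, followed by P.\ref{P.3} to pass from $P$ to $E(P)$, and finally the $L^q$ Doob inequality with $q\to\infty$ so that $(q')^q\to e$. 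None of this is captured by ``plugs directly in''.

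Your sketches for \eqref{Bound2} and \eqref{Bound3} are closer in spirit: the $A_p$ exchange, the $A^*_\infty$ packing via P.\ref{P.3}, and the dyadic stratification of $\mathbb E_i(\omega)\mathbb E_i(\sigma)^{p-1}$ into $O(\log_2[\omega]_{A_p})$ levels are all exactly what the paper does. But again the paper runs these on the fixed test function $\sigma$ after the $S^*_p$ reduction, rather than on a bilinear model sum in $f$ and $g$; this avoids the H\"older-inside-$\mathbb E_i(f)$ step you propose and keeps the bookkeeping shorter.
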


Theorem \ref{thm-es} \eqref{Bound1} and Theorem \ref{thm-es} \eqref{Bound2} are probabilistic versions of \cite[Theorem 4.3]{HyPe}; Theorem \ref{thm-es} \eqref{Bound3} is closely corresponding to \cite[Theorem 1.1]{LM}. We mention that the probabilistic analogue of Hyt\"{o}nen-P\'{e}rez type estimate \cite[Theorem 4.3]{HyPe} first appeared in Tanaka and Terasawa \cite[Theorem 5.1]{TT}. They gave one-weight norm estimates which is similar to Theorem \ref{thm-es} \eqref{Bound1}. Their estimate has two suprema. In particular, if $\omega=v$ in Theorem \ref{thm-es} \eqref{Bound1}, we obtain a better constant than \cite[Theorem 5.1]{TT}.

The article is organized as follows. In Section \ref{Pre}, we state some preliminaries.
We construct principal sets in Section \ref{constru}. In Section \ref{proof}, we provide the proofs of
the above theorems.

Throughout the paper, the letters $C,$ $C_1$ and $C_2$ will be used for constants that may change from one occurrence
to another. We use the notation $A\lesssim B$ to indicate that there is a constant $C$,
independent of the weight constant, such that $A\leq C B.$ We write $A\thickapprox B$ when $A\lesssim B$
and $B\lesssim A.$

\section{Preliminaries}\label{Pre}
This section consists of the preliminaries for this paper.
\subsection{Filtered Measure Space} \label{pre1}
In this subsection we introduce the filtered measure space, which is standard \cite{TT,HyNeVe} (see also references therein). Let a triplet $(\Omega,\mathcal{F},\mu)$ be a measure space. Denote by $\mathcal{F}^0$ the collection of sets in $\mathcal{F}$ with
finite measure. The measure space $(\Omega,\mathcal{F},\mu)$ is called $\sigma$-finite if there exist sets $E_i\in\mathcal{F}^0$ such
that $\Omega=\bigcup\limits_{i=0}^{\infty}E_i$. In this paper all measure spaces are assumed to be $\sigma$-finite. Let $\mathcal{A}\subset\mathcal{F}^0$ be
an arbitrary subset of $\mathcal{F}^0$. An $\mathcal{F}$-measurable function $f:\Omega\rightarrow \mathbb R$ is called $\mathcal{A}$-integrable if it is
integrable on all sets of $\mathcal{A}$, i.e.,
$\chi_Ef\in L^1(\mathcal{F},\mu)$ for all $E\in \mathcal{A}$.
Denote the collection of all such functions by $L^1_{\mathcal{A}}(\mathcal{F},\mu).$

If $\mathcal{G}\subset\mathcal{F}$ is another $\sigma$-algebra, it is called a sub-$\sigma$-algebra of $\mathcal{F}$. A function $g\in L^1_{\mathcal{G}^0}(\mathcal{G},\mu)$ is
called the conditional expectation of $f\in L^1_{\mathcal{G}^0}(\mathcal{F},\mu)$ with respect to $\mathcal{G}$ if there holds
\begin{equation*}
\int_Gfd\mu=\int_Ggd\mu,\quad \forall G\in\mathcal{G}^0.
\end{equation*}
The conditional expectation of $f$ with respect to $\mathcal{G}$ will be denoted by $\mathbb E(f|\mathcal{G})$, which exists
uniquely in $L^1_{\mathcal{G}^0}(\mathcal{G},\mu)$ due to $\sigma$-finiteness of $(\Omega,\mathcal{G},\mu).$

A family of sub-$\sigma$-algebras $(\mathcal{F}_i)_{i\in \mathbb Z}$ is called a filtration of $\mathcal{F}$ if $\mathcal{F}_i\subset\mathcal{F}_j\subset\mathcal{F}$ whenever
$i, j \in \mathbb Z$ and $i < j.$ We call a quadruplet $(\Omega,\mathcal{F},\mu; (\mathcal{F}_i)_{i\in \mathbb Z})$
a $\sigma$-finite filtered measure space. It
contains a filtered probability space with a filtration indexed by $\mathbb N,$
a Euclidean space with a dyadic filtration and doubling metric space with dyadic lattice.

We
write
\begin{equation*}
\mathcal{L}:=\bigcap\limits_{i\in \mathbb Z}L^1_{\mathcal{F}^0_i}(\mathcal{F},\mu).\end{equation*}
Notice that
\begin{equation*}
L^1_{\mathcal{F}^0_i}(\mathcal{F},\mu)\supset L^1_{\mathcal{F}^0_j}(\mathcal{F},\mu)\end{equation*} whenever $i < j.$
For a function $f\in\mathcal{L}$ we will denote
$\mathbb E(f|\mathcal{F}_i)$ by $\mathbb E_i(f).$ By the tower rule of conditional expectations, a family of functions
$\mathbb E_i(f)\in L^1_{\mathcal{F}^0_i}(\mathcal{F},\mu)$ becomes a martingale.

Let $(\Omega,\mathcal{F},\mu; (\mathcal{F}_i)_{i\in \mathbb Z})$  be a $\sigma$-finite filtered measure space. Then a function
$\tau:~ \Omega\rightarrow\{-\infty\}\cup \mathbb Z\cup\{+\infty\}$ is called a stopping time if for any $i\in \mathbb Z,$ we have
$\{\tau=i\}\in \mathcal{F}_i.$ The family of all stopping times is denoted by $\mathcal{T}.$
Fixing $i\in \mathbb Z,$ we denote $\mathcal{T}_i=\{\tau\in \mathcal{T}:~\tau\geq i\}.$

Suppose that function $f\in\mathcal{L}$, the Doob maximal operator is defined by
$$Mf=\sup_{i\in \mathbb Z}|\mathbb E_i(f)|.$$
Fix $i\in \mathbb Z,$ we define the tailed Doob maximal operator
by $${^*M}_if=\sup_{j\geq i}|\mathbb E_j(f)|.$$
Let $\alpha_i$, $i\in\Z$, be a
nonnegative bounded $\mathcal{F}_i$-measurable function
and set $\alpha=(\alpha_i)$. Let $f, g\in \mathcal{L}.$
We define the positive operator $T_{\alpha}(f)$ and bilinear positive operator $T_{\alpha}(f,g)$ by
$$
T_{\alpha}f:=\sum_{i\in\Z}\alpha_i\mathbb E_i(f)\hbox{ and }T_{\alpha}(f,g):=\sum_{i\in\Z}\alpha_i\mathbb E_i(f)\mathbb E_i(g),
$$
respectively.

\subsection{Definitions of Weights}
By a weight we mean a nonnegative function which belongs to $\mathcal{L}$ and, by a convention, we
will denote the set of all weights by $\mathcal{L}^+.$
Let $B\in \mathcal {F},$ $\omega\in \mathcal{L}^+,$
we always denote $\int_\Omega\chi_Bd\mu$ and
$\int_\Omega\chi_B\omega d\mu$ by $|B|$ and $|B|_\omega,$
respectively. Then we define several kinds of weights.

\begin{definition}\label{definition A1}Let $v$ be a weight.
We say that the weight $v$
satisfies the condition $A_1,$ if
there exists a positive constant $C$ such that
      \begin{equation}\label{A1}
\sup\limits_{j\in \mathbb Z}\mathbb E_j(v)\leq Cv.
      \end{equation}
We denote by $[v]_{A_1}$
the smallest constant $C$ in \eqref{A1}.
\end{definition}

\begin{definition}\label{definition Ap}Let $v\hbox{ and }\omega$ be weights and $1<p<\infty.$
We say that the couple of weights $(v,~\omega)$
satisfies the condition $A_p,$ if
there exists a positive constant $C$ such that
      \begin{equation}\label{AP}
\sup\limits_{j\in \mathbb Z}\mathbb E_j(v)\mathbb E_j(\omega^{1-p'})^{\frac{p}{p'}}\leq C,
      \end{equation}
where $\frac{1}{p}+\frac{1}{p'}=1.$ We denote by $[v, \omega]_{A_p}$
the smallest constant $C$ in \eqref{AP}.
\end{definition}

\begin{definition}\label{definition o Ap}Let $\omega$ be a weight and $1<p<\infty.$
We say that the weight $\omega$
satisfies the condition $A_p,$ if
there exists a positive constant $C$ such that
      \begin{equation}\label{AP1}
\sup\limits_{j\in \mathbb Z}\mathbb E_j(\omega)\mathbb E_j(\omega^{1-p'})^{\frac{p}{p'}}\leq C,
      \end{equation}
where $\frac{1}{p}+\frac{1}{p'}=1.$ We denote by $[\omega]_{A_p}$
the smallest constant $C$ in \eqref{AP1}.
\end{definition}

\begin{definition}\label{definition inf Ap}Let $\omega$ be a weight.
We say that the weight $\omega$
satisfies the condition $A^{exp}_{\infty},$ if
there exists a positive constant $C$ such that
      \begin{equation}\label{Ain}
\sup\limits_{j\in \mathbb Z}\mathbb E_j(\omega)\exp\mathbb E_j(\log\omega^{-1})\leq C.
      \end{equation}
We denote by $[\omega]_{A^{exp}_{\infty}}$
the smallest constant $C$ in \eqref{Ain}.
\end{definition}

\begin{definition}\label{definition Sp}Let $v \hbox{ and }\omega$ be weights and $1<p<\infty.$
Denote $\sigma=\omega^{-\frac{1}{p-1}}\in \mathcal{L}^+.$
We say that the couple of weights $(v,~\omega)$
satisfies the condition $S^*_p,$ if
      \begin{equation}\label{bi-SP}
[v,\omega]_{S^*_p}:=\sup\limits_{i\in \mathbb Z,E\in \mathcal {F}^0_i}
\Bigg(\frac{\int_E {^*M_i}(\sigma \chi_{E})^pvd\mu}{\sigma({E})}\Bigg)^{\frac{1}{p}}<\infty.
      \end{equation}
\end{definition}

\begin{definition}\label{definition Bp}Let $v\hbox{ and }\omega$ be weights and $1<p<\infty.$
Denote that $\sigma=\omega^{-\frac{1}{p-1}}\in \mathcal{L}^+.$
We say that the couple of weights $(v,~\omega)$
satisfies the condition $B_p,$ if
there exists a positive constant $C$ such that for all $i\in \mathbb Z$ we have
      \begin{equation}\label{bi-BP}\mathbb E_i(v)\mathbb E_i(\sigma)^p\leq
C\exp\Big(\mathbb E_i(\log(\sigma))\Big).
      \end{equation}
We denote by $[v,\omega]_{B_p}$
the smallest constant $C$ in \eqref{bi-BP}.
\end{definition}

\begin{definition}\label{definition Wp}Let $\omega$ be a weight.
We say that the weight $\omega$
satisfies the condition $A^*_{\infty},$ if
there exists a positive constant $C$ such that for all $i\in \mathbb Z$ and $E\in \mathcal{F}^0_i$ we have
      \begin{equation}\label{bi-WP}\int_{E}
         {^*M}_i(\omega\chi_{E})d\mu\\
\leq C \omega(E).
      \end{equation}
We denote by $[\omega]_{A^*_{\infty}}$
the smallest constant $C$ in \eqref{bi-WP}.
\end{definition}

\begin{remark}\label{rem1}
We summarize basic properties about the conditions.
Let $\omega\in A_p$ and $\sigma=\omega^{1-p'}.$ Then
\begin{enumerate}
  \item $\sigma\in A_{p'}$ and $[\sigma]^{\frac{1}{p'}}_{A_{p'}}=[\omega]^{\frac{1}{p}}_{A_p};$
  \item $\omega\in A^{\exp}_{\infty}$ and $[\omega]_{A^{\exp}_{\infty}}\leq[\omega]_{A_p};$
  \item $\omega\in A^*_{\infty}$ and $[\omega]_{A^*_{\infty}}\lesssim[\omega]_{A^{\exp}_{\infty}}.$
\end{enumerate}
\end{remark}

Following from Remark \ref{rem1}, we give the mixed condition $(A_{p'})^{\frac{1}{p'}}(A^*_{\infty})^{\frac{1}{p}}$ by
      \begin{equation}\label{mixcon1}
      [\sigma]_{(A_{p'})^{\frac{1}{p'}}(A^*_{\infty})^{\frac{1}{p}}}:=
      \sup\limits_{i\in \mathbb Z,Q\in \mathcal{F}^0_{i}}\Bigg(\mathop{\hbox{esssup}}\limits_{Q}(\mathbb E(\omega|\mathcal{F}_i)\mathbb E(\sigma|\mathcal{F}_i)^{p-1})\frac{\int_{Q} {^*M}_i(\sigma\chi_{Q})d\mu}{|Q|}\Bigg)^{\frac{1}{p}}.
      \end{equation}

\section{Construction of principal sets}\label{constru}

We mention that \lq\lq the construction of principal sets\rq\rq here first appeared in Tanaka and Terasawa \cite{TT},
and we find a new property P. \ref{P.3} of the construction. We
repeat the construction of principal sets here for the convenience of our checking the new property P. \ref{P.3}.
We call this property P. \ref{P.3} conditional sparsity. Our results are mainly based on the construction
of principal sets and the conditional sparsity.

Let $i\in \mathbb Z,$ $h\in \mathcal{L}^+.$
Fixing $k\in \mathbb Z,$ we define a stopping time$$
\tau:=\inf\{j\geq i:
~\mathbb E(h|\mathcal{F}_j)>2^{k+1}\}.$$
For $\Omega_0\in \mathcal{F}^0_i,$ we denote that \begin{equation}\label{P0}
P_0:=\{2^{k-1}< \mathbb E(h|\mathcal{F}_i)\leq2^k\}\cap\Omega_0,\end{equation}
and assume $\mu(P_0)>0.$
It follows that $P_0\in \mathcal{F}^0_i.$
We write $\mathcal{K}_1(P_0):=i$ and $\mathcal{K}_2(P_0):=k.$
We let $\mathcal{P}_1:=\{P_0\}$ which we call the first generation of principal sets.
To get the second generation of principal sets we define a stopping time
$$
\tau_{P_0}:=\tau\chi_{P_0}+\infty\chi_{P^c_0},$$
where $P^c_0=\Omega\setminus P_0.$
We say that a set $P\subset P_0$ is a principal set with respect to $P_0$
if it satisfies $\mu(P)>0$ and there exist $j>i$ and $l>k+1$ such that
\begin{eqnarray*}
P&=&\{2^{l-1}< \mathbb E(h|\mathcal{F}_j)
          \leq2^l\}\cap\{\tau_{P_0}=j\}\cap P_0\\
&=&\{2^{l-1}< \mathbb E(h|\mathcal{F}_j)
          \leq2^l\}\cap\{\tau=j\}\cap P_0.\end{eqnarray*}
Noticing that such $j$ and $l$ are unique, we write $\mathcal{K}_1(P):=j$ and $\mathcal{K}_2(P):=l.$
We let $\mathcal{P}(P_0)$ be the set of all principal sets with respect to $P_0$ and
let $\mathcal{P}_2:=\mathcal{P}(P_0)$ which we call the second generalization of principal sets.

We now need to verify that
$$
\mu(P_0)\leq2\mu\big(E(P_0)\big)
$$
where
$$
E(P_0):=P_0\cap\{\tau_{P_0}=\infty\}=P_0\cap\{\tau=\infty\}=P_0\backslash\bigcup\limits_{P\in \mathcal{P}(P_0)}P.
$$
Indeed, we have
\begin{eqnarray*}
\mu\big(P_0\cap\{\tau_{P_0}<\infty\}\big)
&\leq& 2^{-k-1}\int_{P_0\cap\{\tau_{P_0}<\infty\}}\mathbb E(h|\mathcal{F}_{\tau_{P_0}})d\mu\\
&=& 2^{-k-1}\int_{P_0}\mathbb E(h|\mathcal{F}_{\tau_{P_0}})\chi_{\{\tau_{P_0}<\infty\}}d\mu\\
&=& 2^{-k-1}\int_{P_0}\sum\limits_{j\geq i}\mathbb E(h|\mathcal{F}_{\tau_{P_0}})\chi_{\{\tau_{P_0}=j\}}d\mu\\
&=& 2^{-k-1}\int_{P_0}\sum\limits_{j\geq i}\mathbb E(h|\mathcal{F}_j)\chi_{\{\tau_{P_0}=j\}}d\mu.\end{eqnarray*}
It follows that
\begin{eqnarray*}
\mu\big(P_0\cap\{\tau_{P_0}<\infty\}\big)
&\leq& 2^{-k-1}\int_{P_0}\mathbb E_i\Big(\sum\limits_{j\geq i}\mathbb E(h\chi_{\{\tau_{P_0}=j\}}|\mathcal{F}_{j})\Big)d\mu\\
&=& 2^{-k-1}\int_{P_0}\sum\limits_{j\geq i}\mathbb E_i(h\chi_{\{\tau_{P_0}=j\}})d\mu\\
&=& 2^{-k-1}\int_{P_0}\mathbb E_i(h\chi_{\{\tau_{P_0}<\infty\}})d\mu\\
&\leq& 2^{-k-1}\int_{P_0}\mathbb E_i(h)d\mu
\leq \frac{1}{2}\mu(P_0).
\end{eqnarray*}
This clearly implies $$
\mu(P_0)\leq2\mu\big(E(P_0)\big).
$$

For any $P'_0\in (P_0\cap\mathcal{F}^0_i),$ there exists a set $\Omega''_0\in \mathcal{F}^0_i$ such that
$$P'_0=P_0\cap\Omega''_0=\{2^{k-1}< \mathbb E(h|\mathcal{F}_i)\leq2^k\}\cap\Omega_0\cap\Omega''_0.$$
Taking $\Omega'_0=\Omega_0\cap\Omega''_0,$ we have $P'_0=\{2^{k-1}< \mathbb E(h|\mathcal{F}_i)\leq2^k\}\cap\Omega'_0.$
Using $\Omega'_0$
instead of $\Omega_0$ in \eqref{P0}, we deduce that $$\mu(P'_0)\leq2\mu\big(E(P'_0)\big).$$
Moreover, we obtain that
\begin{eqnarray*}
\int_{P'_0}\chi_{P_0}d\mu&=&\mu(P'_0\cap P_0)=\mu(P'_0)
\leq2\mu\big(E(P'_0)\big)=2\mu\big(P'_0\cap\{\tau=\infty\}\big)\\
&=&2\mu\big(P'_0\cap P_0\cap\{\tau=\infty\}\big)
=2\int_{P'_0}\chi_{E(P_0)}d\mu\\
&=&2\int_{P'_0}\mathbb E_i(\chi_{E(P_0)})d\mu.
\end{eqnarray*}
Since $P'_0$ is arbitrary, we have $\chi_{P_0}\leq 2\mathbb E_i(\chi_{E(P_0)})\chi_{P_0}.$

The next generalizations are defined inductively,
$$
\mathcal{P}_{n+1}:=\bigcup\limits_{P\in \mathcal{P}_{n}}\mathcal{P}(P),
$$
and we define the collection of principal sets $\mathcal{P}$ by
$$
\mathcal{P}:=\bigcup\limits_{n=1}^{\infty}\mathcal{P}_{n}.
$$
It is easy to see that the collection of principal sets $\mathcal{P}$ satisfies the following properties:
\begin{enumerate}[P.1]
  \item  \label{P.1} The sets $E(P)$ where $P\in \mathcal{P},$ are disjoint and $P_0=\bigcup\limits_{P\in \mathcal{P}}E(P);$
  \item  \label{P.2} $P\in\mathcal{F}_{{\mathcal{K}}_1(P)};$
  \item  \label{P.3} $\chi_{P}\leq 2\mathbb E(\chi_{E(P)}|\mathcal{F}_{{\mathcal{K}}_1(P)})\chi_{P};$
  \item  \label{P.4} $2^{{\mathcal{K}}_2(P)-1}<\mathbb E(h|\mathcal{F}_{{\mathcal{K}}_1(P)})\leq2^{{\mathcal{K}}_2(P)}$ on $P;$
  \item  \label{P.5} $\sup\limits_{j\geq i}\mathbb E_j(h\chi_P)
                 \leq2^{{\mathcal{K}}_2(P)+1}$ on $E(P);$
  \item  \label{P.6} $\chi_{\{\mathcal{K}_1(P)\leq j<\tau(P)\}}\mathbb E_j(h)\leq2^{{\mathcal{K}}_2(P)+1}.$
\end{enumerate}

We use the principal sets to represent the tailed Doob maximal operator and obtain the following lemma.
\begin{lemma}\label{repre}Let $i\in \mathbb{Z}$ and $h\in \mathcal{L}^+.$
Fixing $k\in \mathbb{Z}$ and $\Omega_0\in \mathcal{F}^0_i,$ we denote $$P_0:=\{2^{k-1}< \mathbb E(h|\mathcal{F}_i)\leq2^k\}\cap\Omega_0.$$
If $\mu(P_0)>0,$ then
\begin{eqnarray*}
{^*M_i}(h)\chi_{P_0}
&=&{^*M_i}(h\chi_{P_0})\chi_{P_0}\\
&=&\sum\limits_{P\in \mathcal{P}}{^*M_i}(h\chi_{P_0})\chi_{E(P)}\\
&\leq&4\sum\limits_{P\in \mathcal{P}}2^{({\mathcal{K}}_2(P)-1)}\chi_{E(P)}.
\end{eqnarray*}
\end{lemma}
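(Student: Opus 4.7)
The plan is to verify the two identities and the inequality in turn, with the inequality carrying all the substance. The first equality ${^*M_i}(h)\chi_{P_0}={^*M_i}(h\chi_{P_0})\chi_{P_0}$ follows from $P_0\in\mathcal F_i\subset\mathcal F_j$ for every $j\geq i$, so that the pull-out property yields $\mathbb E_j(h\chi_{P_0})=\chi_{P_0}\mathbb E_j(h)$; taking the supremum over $j\geq i$ and multiplying by $\chi_{P_0}$ concludes. The second equality ${^*M_i}(h\chi_{P_0})\chi_{P_0}=\sum_{P\in\mathcal P}{^*M_i}(h\chi_{P_0})\chi_{E(P)}$ is immediate from property P.\ref{P.1}, which provides the disjoint decomposition $\chi_{P_0}=\sum_{P\in\mathcal P}\chi_{E(P)}$.

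For the final inequality the plan is to establish the pointwise bound ${^*M_i}(h\chi_{P_0})\leq 2^{\mathcal K_2(P)+1}=4\cdot 2^{\mathcal K_2(P)-1}$ on each $E(P)$. Fix $P\in\mathcal P$ together with its ancestor chain $P_0=P^{(0)}\supset P^{(1)}\supset\cdots\supset P^{(m)}=P$, fix $x\in E(P)\subset P$, and consider $\mathbb E_j(h\chi_{P_0})(x)$ at an arbitrary level $j\geq i$. If $j\geq\mathcal K_1(P)$, then by P.\ref{P.2} we have $P\in\mathcal F_j$; writing $h\chi_{P_0}=h\chi_P+h\chi_{P_0\setminus P}$ and pulling out the $\mathcal F_j$-measurable indicator, the second conditional expectation vanishes at $x\in P$, so $\mathbb E_j(h\chi_{P_0})(x)=\mathbb E_j(h\chi_P)(x)$, and property P.\ref{P.5} delivers the bound $2^{\mathcal K_2(P)+1}$.

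If instead $i\leq j<\mathcal K_1(P)$, I would select the unique ancestor index $s<m$ with $\mathcal K_1(P^{(s)})\leq j<\mathcal K_1(P^{(s+1)})$. Since $P^{(s)}\in\mathcal F_j$, the same pull-out reduces $\mathbb E_j(h\chi_{P_0})(x)$ to $\mathbb E_j(h\chi_{P^{(s)}})(x)\leq\mathbb E_j(h)(x)$. Because $x\in P\subset P^{(s+1)}\subset\{\tau_{P^{(s)}}=\mathcal K_1(P^{(s+1)})\}$, the range condition $\mathcal K_1(P^{(s)})\leq j<\tau_{P^{(s)}}(x)$ holds at $x$, so property P.\ref{P.6} applied to $P^{(s)}$ yields $\mathbb E_j(h)(x)\leq 2^{\mathcal K_2(P^{(s)})+1}\leq 2^{\mathcal K_2(P)+1}$, the last step using that $\mathcal K_2$ is strictly increasing along the ancestor chain. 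Summing on $P$ then completes the proof. The main obstacle is exactly this second regime: at levels $j<\mathcal K_1(P)$ the set $P$ is not yet $\mathcal F_j$-measurable, so P.\ref{P.5} is unavailable and one cannot localize the conditional expectation to $P$; the fix is to climb the tree of principal sets to the most recent ancestor whose $\mathcal K_1$-level lies at or below $j$, where the pull-out becomes legal and the pre-stopping-time bound P.\ref{P.6} supplies precisely the required exponent.
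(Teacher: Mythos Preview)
Your proof is correct. The paper does not supply a proof of this lemma; it is stated immediately after the list of properties P.\ref{P.1}--P.\ref{P.6} and left as a direct consequence of them. Your argument makes this derivation explicit and handles the one genuine subtlety: property P.\ref{P.5} bounds $\sup_{j\geq i}\mathbb E_j(h\chi_P)$ on $E(P)$, whereas the lemma requires a bound on $\sup_{j\geq i}\mathbb E_j(h\chi_{P_0})$. For levels $j\geq\mathcal K_1(P)$ these coincide on $P$ by the pull-out property, but for $i\leq j<\mathcal K_1(P)$ they need not, and your ancestor-chain argument invoking P.\ref{P.6} together with the strict monotonicity of $\mathcal K_2$ along the chain is exactly the right way to close this gap. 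Two minor remarks: in the second regime you actually have the equality $\mathbb E_j(h\chi_{P_0})(x)=\mathbb E_j(h)(x)$ rather than merely an inequality (since $x\in P_0\in\mathcal F_j$), and the final ``summing on $P$'' is just reassembling the disjoint pieces $E(P)$---no actual summation of the bounds is needed, only the pointwise estimate on each $E(P)$.
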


The following lemma is a Carleson embedding theorem associated with the collection of principal sets $\mathcal{P},$ which is
essentially \cite[Lemma 2.2]{TT1}. We provide a different proof.

\begin{lemma}\label{lemma_Carleson}We have
$$\sum\limits_{P\in \mathcal{P}}\mu(P)2^{p(\mathcal{K}_2(P)-1)}\leq2(p')^p\|h\chi_{P_0}\|^p_{L^p(d\mu)}.$$
\end{lemma}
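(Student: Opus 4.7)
The plan is to exploit the conditional sparsity property P.\ref{P.3}, which lets me trade each $\mu(P)$ for $2\mu(E(P))$; then the disjointness P.\ref{P.1} collapses the sum into an integral over $P_0$, and a pointwise bound via P.\ref{P.4} plus Doob's $L^p$ maximal inequality close the estimate with the sharp constant $p'$.

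First I would integrate P.\ref{P.3}. Since $P\in\mathcal{F}_{\mathcal{K}_1(P)}$ by P.\ref{P.2} and $E(P)\subseteq P$, integrating $\chi_P\leq 2\mathbb{E}(\chi_{E(P)}|\mathcal{F}_{\mathcal{K}_1(P)})\chi_P$ with respect to $d\mu$ yields $\mu(P)\leq 2\mu(E(P))$. Therefore
$$\sum_{P\in\mathcal{P}}\mu(P)\,2^{p(\mathcal{K}_2(P)-1)}\leq 2\sum_{P\in\mathcal{P}}\mu(E(P))\,2^{p(\mathcal{K}_2(P)-1)}.$$

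Next I would bound each dyadic level pointwise on $E(P)$. By P.\ref{P.4}, $2^{\mathcal{K}_2(P)-1}<\mathbb{E}(h|\mathcal{F}_{\mathcal{K}_1(P)})$ on $P$, hence on $E(P)$; and since $\mathcal{K}_1(P)\geq i$ by construction, $\mathbb{E}(h|\mathcal{F}_{\mathcal{K}_1(P)})\leq{^*M_i}h$ there. Combined with the disjointness from P.\ref{P.1} and the identity $P_0=\bigsqcup_{P\in\mathcal{P}}E(P)$, this gives
$$\sum_{P\in\mathcal{P}}\mu(E(P))\,2^{p(\mathcal{K}_2(P)-1)}\leq\sum_{P\in\mathcal{P}}\int_{E(P)}({^*M_i}h)^p\,d\mu=\int_{P_0}({^*M_i}h)^p\,d\mu.$$

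Finally, Lemma \ref{repre} gives ${^*M_i}(h)\chi_{P_0}={^*M_i}(h\chi_{P_0})\chi_{P_0}$, so the right-hand side equals $\int_{P_0}({^*M_i}(h\chi_{P_0}))^p\,d\mu\leq\|{^*M_i}(h\chi_{P_0})\|_{L^p(d\mu)}^p$. Applying Doob's $L^p$ maximal inequality to the tailed operator ${^*M_i}$, which holds with constant $p'$ on $\sigma$-finite filtered measure spaces, produces the bound $(p')^p\|h\chi_{P_0}\|_{L^p(d\mu)}^p$; multiplying by the factor of $2$ from the first step gives the stated $2(p')^p$.

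The argument is essentially a bookkeeping exercise once the right ingredients are identified; the one point that deserves care is that the domination $\mathbb{E}(h|\mathcal{F}_{\mathcal{K}_1(P)})\leq{^*M_i}h$ requires $\mathcal{K}_1(P)\geq i$ for every principal set, which is guaranteed by the inductive construction starting from the stopping time $\tau\geq i$. Keeping the constant sharp in the last step also hinges on using the correct $L^p$ bound $p'$ for the tailed Doob maximal operator rather than a generic constant.
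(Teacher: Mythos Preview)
Your proof is correct and follows essentially the same approach as the paper's: both use P.\ref{P.3} to pass from $P$ to $E(P)$, then P.\ref{P.4} to bound $2^{\mathcal{K}_2(P)-1}$ by a conditional expectation, then P.\ref{P.1} and Doob's $L^p$ inequality to finish. The only cosmetic differences are that the paper applies P.\ref{P.4} before P.\ref{P.3} (using the conditional form of P.\ref{P.3} rather than its integrated consequence $\mu(P)\leq 2\mu(E(P))$) and bounds directly by $M(h\chi_{P_0})$ instead of passing through ${^*M_i}h$ and Lemma~\ref{repre}.
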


\begin{proof}[Proof of Lemma \ref{lemma_Carleson}] \be
\sum\limits_{P\in \mathcal{P}}\mu(P)2^{p(\mathcal{K}_2(P)-1)}
&\leq&\sum\limits_{P\in \mathcal{P}}\int_P\mathbb E(h\chi_{P_0}|\mathcal{F}_{{\mathcal{K}}_1(P)})^pd\mu\\
&=&\sum\limits_{P\in \mathcal{P}}\int_P\mathbb E(h\chi_{P_0}|\mathcal{F}_{{\mathcal{K}}_1(P)})^p\chi_Pd\mu.
\ee
Combining it with P.\ref{P.3} of the construction of principal sets, we have
\be
\sum\limits_{P\in \mathcal{P}}\mu(P)2^{p(\mathcal{K}_2(P)-1)}
&\leq&2\sum\limits_{P\in \mathcal{P}}\int_P\mathbb E(h\chi_{P_0}|\mathcal{F}_{{\mathcal{K}}_1(P)})^p\mathbb E(\chi_{E(P)}|\mathcal{F}_{{\mathcal{K}}_1(P)})d\mu\\
&\leq&2\sum\limits_{P\in \mathcal{P}}\int_P\mathbb E(h\chi_{P_0}|\mathcal{F}_{{\mathcal{K}}_1(P)})^p\chi_{E(P)}d\mu\\
&=&2\sum\limits_{P\in \mathcal{P}}\int_{E(P)}\mathbb E(h\chi_{P_0}|\mathcal{F}_{{\mathcal{K}}_1(P)})^pd\mu\ee
In the view of the definition of Doob's maximal operator, we have
\be
\sum\limits_{P\in \mathcal{P}}\mu(P)2^{p(\mathcal{K}_2(P)-1)}
\leq2\sum\limits_{P\in \mathcal{P}}\int_{E(P)}(M(h\chi_{P_0}))^pd\mu\leq&\int_{\Omega}(M(h\chi_{P_0}))^pd\mu.
\ee
It follows from boundedness of Doob's maximal operator that
$$\sum\limits_{P\in \mathcal{P}}\mu(P)2^{p(\mathcal{K}_2(P)-1)}\leq2(p')^p\|h\chi_{P_0}\|^p_{L^p(d\mu)}.$$
\end{proof}

The following lemma can be found in \cite[Theorem 4.1]{TT} or \cite[Theorem 3.2]{ChLi}.
\begin{lemma}\label{lemma_Sp} Let $v,\omega$ be weights, $1<p<\infty$ and $\sigma=\omega^{-\frac{1}{p-1}}.$
Then the following statements are equivalent:
\begin{enumerate}
\item There exists a positive constant $C_1$ such that
\begin{equation}\label{Sp1}
\|M(f)\|_{L^p(v)}\leq C_1
\|f\|_{L^{p}(\omega)},\end{equation}
where $f\in L^{p}(\omega);$

\item The couple of weights $(v,\omega)$ satisfies the condition $S^*_p.$
\end{enumerate}
Moreover, we denote the smallest constant $C_1$ in \eqref{Sp1} by $\|M\|$. Then
$\|M\|\sim[v,\omega]_{S^*_p}$\end{lemma}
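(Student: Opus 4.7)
The plan is to prove the two implications separately. The easy direction $(1)\Rightarrow(2)$ is a routine testing computation: fix $i\in\mathbb{Z}$ and $E\in\mathcal{F}_i^0$ and apply the assumed boundedness \eqref{Sp1} to the test function $f=\sigma\chi_E$. Since $\sigma=\omega^{-1/(p-1)}$ forces $\sigma^p\omega=\sigma$, one has $\|\sigma\chi_E\|_{L^p(\omega)}^p=\sigma(E)$, while restricting the left-hand side of \eqref{Sp1} to $E$ and using the pointwise inequality ${^*M_i}\leq M$ produces exactly the numerator of \eqref{bi-SP}. Taking the supremum over $(i,E)$ yields $[v,\omega]_{S_p^*}\leq\|M\|$.

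The substantive direction $(2)\Rightarrow(1)$ is where the principal-set machinery of Section \ref{constru} does the work. First I would reduce by assuming $f\geq 0$ and changing variables via $f=g\sigma$, so that $\|f\|_{L^p(\omega)}^p=\|g\|_{L^p(\sigma)}^p$ and the goal becomes $\|M(g\sigma)\|_{L^p(v)}\lesssim[v,\omega]_{S_p^*}\|g\|_{L^p(\sigma)}$. Set $h=g\sigma$. To connect with Section \ref{constru}, I would dyadically slice $h$: for each $k\in\mathbb{Z}$ define the stopping time $\tau_k=\inf\{j:\mathbb{E}_j(h)>2^{k-1}\}$, and observe that $\{Mh>0\}$ is partitioned by the pairwise disjoint pieces $P_0^{(i,k)}=\{\tau_k=i\}\cap\{\mathbb{E}_i(h)\leq 2^k\}\in\mathcal{F}_i^0$. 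Each such piece has exactly the form \eqref{P0}, so the principal-set construction applies to it with root $P_0^{(i,k)}$.

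On each fixed $P_0=P_0^{(i,k)}$, Lemma \ref{repre} gives the pointwise bound $M(h)\chi_{P_0}\lesssim\sum_{P\in\mathcal{P}(P_0)}2^{\mathcal{K}_2(P)}\chi_{E(P)}$. Raising to the $p$th power and integrating against $v$, then using P.\ref{P.4} to bound $2^{\mathcal{K}_2(P)}$ by $\mathbb{E}_{\mathcal{K}_1(P)}(h)$ on $P$, together with the conditional-sparsity inequality $\chi_P\leq 2\mathbb{E}_{\mathcal{K}_1(P)}(\chi_{E(P)})\chi_P$ from P.\ref{P.3}, each summand reduces to an integral that the $S_p^*$ testing condition on the $\mathcal{F}_{\mathcal{K}_1(P)}$-measurable set $P$ controls directly in terms of $\int_P g^p\sigma\,d\mu$. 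Summing over $\mathcal{P}(P_0)$ via the Carleson embedding of Lemma \ref{lemma_Carleson} applied to $h=g\sigma$, and then over the disjoint family indexed by $(i,k)\in\mathbb{Z}^2$, produces the global bound $\lesssim[v,\omega]_{S_p^*}^p\|g\|_{L^p(\sigma)}^p$.

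The main obstacle is the combinatorial bookkeeping in this last step: one must arrange the decomposition so that the sets $E(P)$ arising from different $P_0^{(i,k)}$'s are pairwise disjoint (so the testing constant $[v,\omega]_{S_p^*}^p$ is used only once, without an overlap penalty), and so that the interior sparse structures inside the various $P_0^{(i,k)}$'s aggregate into a single global Carleson embedding for $h$. The conditional-sparsity property P.\ref{P.3} is precisely what enables both requirements to be met without incurring a logarithmic or polynomial loss in the weight constants.
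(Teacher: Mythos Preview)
The paper does not prove this lemma; it simply records it as a known result and cites \cite[Theorem~4.1]{TT} and \cite[Theorem~3.2]{ChLi}. So there is no in-paper argument to compare your proposal against.

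That said, your sketch of $(2)\Rightarrow(1)$ has a real gap, and it lies exactly in the choice of which function and which measure drive the principal-set construction. You build principal sets for $h=g\sigma$ in the \emph{unweighted} space $(\Omega,\mathcal{F},\mu)$. With that choice, two things go wrong. First, the level $2^{\mathcal{K}_2(P)}$ is comparable to $\mathbb{E}_{\mathcal{K}_1(P)}(g\sigma)$, whereas the testing quantity in \eqref{bi-SP} involves ${^*M_i}(\sigma\chi_E)$; there is no mechanism for ``the $S_p^*$ testing condition on $P$'' to control $\int_{E(P)}\mathbb{E}_{\mathcal{K}_1(P)}(g\sigma)^p\,v\,d\mu$ by $\int_P g^p\sigma\,d\mu$ as you assert, because the $g$ and $\sigma$ contributions have not been separated. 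Second, Lemma~\ref{lemma_Carleson} applied to $h=g\sigma$ in $(\Omega,\mu)$ bounds $\sum_P\mu(P)2^{p\mathcal{K}_2(P)}$ by $\|g\sigma\|_{L^p(d\mu)}^p=\int g^p\sigma^p\,d\mu$, which is not $\|g\|_{L^p(\sigma)}^p=\int g^p\sigma\,d\mu$; these differ unless $\sigma$ is bounded.

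The correct maneuver (and the one used in the cited references) is to run the principal-set construction for $g$ in the \emph{weighted} filtered space $(\Omega,\mathcal{F},\sigma\,d\mu;(\mathcal{F}_j))$, i.e.\ with the averages $\mathbb{E}_j^\sigma(g)=\mathbb{E}_j(g\sigma)/\mathbb{E}_j(\sigma)$. On each $E(P)$ one then has, for the admissible indices $j$, the factorisation $\mathbb{E}_j(g\sigma)=\mathbb{E}_j^\sigma(g)\,\mathbb{E}_j(\sigma)\lesssim 2^{\mathcal{K}_2(P)}\,{^*M_{\mathcal{K}_1(P)}}(\sigma\chi_P)$, so that the testing hypothesis yields $\int_{E(P)}M(g\sigma)^p v\,d\mu\lesssim 2^{p\mathcal{K}_2(P)}[v,\omega]_{S_p^*}^p\,\sigma(P)$, and the $\sigma$-measure version of Lemma~\ref{lemma_Carleson} closes the estimate with the right-hand side $\|g\|_{L^p(\sigma)}^p$.
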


\section{Proofs of main results} \label{proof}

We provide the proofs of our main results in this section. For simplicity we denote operator $T_\alpha$ by $T$ in the proofs of Theorem \ref{Th-posi} and Theorem \ref{Th-w-posi}.

Before we give the proof of Theorem \ref{Th-posi}, we mention that our method is similar to that of the proof of the main result in Tanaka and Terasawa \cite{TT1}. Our new ingredient is the definition of $F_j:=\{\mathbb E^{\omega}_j(g)^{q'}\omega\leq \mathbb E^{\sigma}_j(f)^{p}\sigma\}$, which appears in \eqref{change}. In general $F_j$ is not a $\mathcal{F}_i$-measurable set. This creates a difficulty in  \eqref{whyA1}. To overcome the difficulty, we assume that $\omega\in A_1$ and $\sigma\in A_1.$

When we compare Theorem \ref{Th-posi} to the local characterization of Lacey, Sawyer and Uriarte-Tuero \cite[Theorem 1.11]{LaSaUr}, we do not know whether our assumptions $\omega\in A_1$ and $\sigma\in A_1$ are superfluous on filtered measure spaces. We recall that the proof of \cite[Theorem 1.11]{LaSaUr} depends very much on the dyadic structure. It is clear that our testing condition \eqref{thmq s1} and \eqref{thmq s2} are the generalization of the local characterization of Lacey, Sawyer and Uriarte-Tuero \cite[Theorem 1.11]{LaSaUr}. For the global characterization of \cite[Theorem 1.11]{LaSaUr}, we still have no idea to generalize it on filtered measure spaces.

\begin{proof}[Proof of Theorem \ref{Th-posi}]
$\eqref{thms1}\Rightarrow \eqref{thms2}$ is trivial and we omit it. Note that we do not use $\omega\in A_1$ and $\sigma\in A_1$ in this part.

$\eqref{thms2}\Rightarrow\eqref{thms1}$
Let $i\in \mathbb Z$ be arbitrarily taken and be fixed. By a standard limiting argument, it suffices to prove that the inequality
\be&&\sum\limits_{j\geq i}\int_{\Omega}\alpha_j\mathbb E_j(f\sigma)\mathbb E_j(g\omega)d\mu\\
&\lesssim& [\omega, \sigma]_{\alpha,q',p'}[\omega]_{A_1}\|f\|^{p\theta}_{L^p(\sigma)}
+[\sigma,\omega]_{\alpha,p,q}[\sigma]_{A_1}\|g\|^{q'\theta}_{L^{q'}(\omega)},~\theta:=\frac{1}{p}+\frac{1}{q'},\ee
holds (the rest follows from the homogeneity).

We set \begin{equation}\label{change}F_j:=\{\mathbb E^{\omega}_j(g)^{q'}\omega\leq \mathbb E^{\sigma}_j(f)^{p}\sigma\}\hbox{ and }G_j:= \Omega\setminus F_j.\end{equation}
We shall prove that
\begin{equation}\label{FF}\sum\limits_{j\geq i}\int_{\Omega}\chi_{F_j}\alpha_j\mathbb E_j(f\sigma)\mathbb E_j(g\omega)d\mu
\lesssim [\omega, \sigma]_{\alpha,q',p'}[\omega]_{A_1}\|f\|^{p\theta}_{L^p(\sigma)}
\end{equation}
and
\begin{equation}\label{GG}\sum\limits_{j\geq i}\int_{\Omega}\chi_{G_j}\alpha_j\mathbb E_j(f\sigma)\mathbb E_j(g\omega)d\mu
\lesssim [\sigma,\omega]_{\alpha,p,q}[\sigma]_{A_1}\|g\|^{q'\theta}_{L^{q'}(\omega)}.
\end{equation}

Since the proofs of $\eqref{FF}$ and $\eqref{GG}$ can be done in a completely symmetric way,
we only prove $\eqref{FF}$ in the following.

We estimate $\sum\limits_{j\geq i}\int_{E}\chi_{F_j}\alpha_j\mathbb E_j(f\sigma)\mathbb E_j(g\omega)d\mu$
for $E=P_0\in \mathcal{F}^0_i,$ where $\sigma(P_0)>0$ and, for some $k\in \mathbb Z,$
$P_0:=\{2^{k-1}< \mathbb E^{\sigma}_i(f)\leq2^k\}.$
For the above $i,$ $P_0,$ $\sigma d\mu$ and $f,$ we apply the construction of principal sets.
Using the principal sets $\mathcal{P},$ we can decompose the left-hand side of (\ref{FF}) as follows:
\be
\sum\limits_{j\geq i}\int_{E}\chi_{F_j}\alpha_j\mathbb E_j(f\sigma)\mathbb E_j(g\omega)d\mu
&=&\sum\limits_{j\geq i}\int_{E}\chi_{F_j}\alpha_j\mathbb E^{\sigma}_j(f)\mathbb E^{\omega}_j(g)\mathbb E_j(\sigma) \mathbb E_j(\omega) d\mu\\
&=&\sum\limits_{P\in \mathcal{P}}\sum\limits_{j\geq \mathcal{K}_1(P)}
\int_{P\cap\{j<\tau_P\}}\chi_{F_j}\alpha_j\mathbb E^{\sigma}_j(f)\mathbb E^{\omega}_j(g)\mathbb E_j(\sigma)\mathbb E_j(\omega)d\mu.\\
\ee
Because of $\omega\in A_1,$ we have
\begin{align}
&\sum\limits_{j\geq \mathcal{K}_1(P)}\int_{P\cap\{j<\tau_P\}}\chi_{F_j}\alpha_j\mathbb E^{\sigma}_j(f)\mathbb E^{\omega}_j(g)\mathbb E_j(\sigma)\mathbb E_j(\omega)d\mu\nonumber\\
&\leq2^{\mathcal{K}_2(P)+1} [\omega]_{A_1} \sum\limits_{j\geq \mathcal{K}_1(P)}\int_{P\cap\{j<\tau_P\}}\alpha_j \mathbb E_j(\sigma) \chi_{F_j}\mathbb E^{\omega}_j(g)\omega d\mu\label{whyA1}\\
&\leq2^{\mathcal{K}_2(P)+1}[\omega]_{A_1} \sum\limits_{j\geq \mathcal{K}_1(P)}\int_P\alpha_j \mathbb E_j(\sigma)\sup\limits_{\mathcal{K}_1(P)\leq j<\tau(P)}(\chi_{F_j}\mathbb E^{\omega}_j(g))\omega d\mu\nonumber\\
&=2^{\mathcal{K}_2(P)+1}[\omega]_{A_1} \int_P\sum\limits_{j\geq \mathcal{K}_1(P)}\alpha_j \mathbb E_j(\sigma)\sup\limits_{\mathcal{K}_1(P)\leq j<\tau(P)}(\chi_{F_j}\mathbb E^{\omega}_j(g))\omega d\mu.\nonumber
\end{align}
Combining it with H\"{o}lder's inequality, we have
\be
& &\sum\limits_{j\geq \mathcal{K}_1(P)}\int_{P\cap\{j<\tau_P\}}\chi_{F_j}\alpha_j\mathbb E^{\sigma}_j(f)\mathbb E^{\omega}_j(g)\mathbb E_j(\sigma)\mathbb E_j(\omega)d\mu\\
&\leq&2^{\mathcal{K}_2(P)+1}[\omega]_{A_1}\Bigg(\int_P\Big(\sum\limits_{j\geq \mathcal{K}_1(P)}\alpha_j \mathbb E_j(\sigma)\Big)^q\omega d\mu\Bigg)^{\frac{1}{q}}
\Bigg(\int_P\Big(\sup\limits_{\mathcal{K}_1(P)\leq j<\tau(P)}(\chi_{F_j}\mathbb E^{\omega}_j(g))\Big)^{q'}\omega d\mu\Bigg)^{\frac{1}{q'}}.\ee
In view of the definition of $F_j,$ we obtain
\be
& &\sum\limits_{j\geq \mathcal{K}_1(P)}\int_{P\cap\{j<\tau_P\}}\chi_{F_j}\alpha_j\mathbb E^{\sigma}_j(f)\mathbb E^{\omega}_j(g)\mathbb E_j(\sigma)\mathbb E_j(\omega)d\mu\\
&\leq&[\omega, \sigma]_{\alpha,q',p'}[\omega]_{A_1}\Big(2^{p({\mathcal{K}_2(P)+1})}\sigma(P)\Big)^{\frac{1}{p}}\Bigg(\int_P\Big(\sup\limits_{\mathcal{K}_1(P)\leq j<\tau(P)}(\mathbb E^{\sigma}_j(f))\Big)^{p}\sigma d\mu\Bigg)^{\frac{1}{q'}}\\
&\leq&[\omega, \sigma]_{\alpha,q',p'}[\omega]_{A_1}\Big(2^{p({\mathcal{K}_2(P)+1})}\sigma(P)\Big)^{\frac{1}{p}}
\Big(2^{p({\mathcal{K}_2(P)+1})}\sigma(P)\Big)^{\frac{1}{q'}}.
\ee
It follows from $\theta=\frac{1}{p}+\frac{1}{q'}\geq1$ that
\be \sum\limits_{j\geq i}\int_{E}\chi_{F_j}\alpha_j\mathbb E_j(f\sigma)\mathbb E_j(g\omega)d\mu
&\leq&[\omega, \sigma]_{\alpha,q',p'}[\omega]_{A_1}\sum\limits_{P\in \mathcal{P}} \Big(2^{p({\mathcal{K}_2(P)+1})}\sigma(P)\Big)^{\theta}\\
&\leq&[\omega, \sigma]_{\alpha,q',p'}[\omega]_{A_1}
\Big(\sum\limits_{P\in \mathcal{P}} 2^{p({\mathcal{K}_2(P)+1})}\sigma(P)\Big)^{\theta}\\
&\lesssim&[\omega, \sigma]_{\alpha,q',p'}[\omega]_{A_1}\Big(\sum\limits_{P\in \mathcal{P}} 2^{p({\mathcal{K}_2(P)-1})}\sigma(P)\Big)^{\theta}.\ee
Using Lemma \ref{lemma_Carleson}, we have
\begin{equation}\label{lemm3.2}\sum\limits_{j\geq i}\int_{E}\chi_{F_j}\alpha_j\mathbb E_j(f\sigma)\mathbb E_j(g\omega)d\mu\lesssim
[\omega, \sigma]_{\alpha,q',p'}[\omega]_{A_1}\|f\chi_{P_0}\|^{p\theta}_{L^p(\sigma)}.
\end{equation}

Note that
\be
\sum\limits_{j\geq i}\int_{\Omega}\chi_{F_j}\alpha_j\mathbb E_j(f\sigma)\mathbb E_j(g\omega)d\mu
&=&\sum\limits_{j\geq i}\sum_{k\in \mathbb Z}\int_{\{2^{k-1}< \mathbb E^{\sigma}_i(f)\leq2^k\}}\chi_{F_j}\alpha_j\mathbb E_j(f\sigma)\mathbb E_j(g\omega)d\mu\\
&=&\sum_{k\in\mathbb Z}\sum\limits_{j\geq i}\int_{\{2^{k-1}< \mathbb E^{\sigma}_i(f)\leq2^k\}}\chi_{F_j}\alpha_j\mathbb E_j(f\sigma)\mathbb E_j(g\omega)d\mu.
\ee
Combining this with \eqref{lemm3.2}, we have
\be
& &\sum\limits_{j\geq i}\int_{\Omega}\chi_{F_j}\alpha_j\mathbb E_j(f\sigma)\mathbb E_j(g\omega)d\mu\\
&\lesssim&[\omega, \sigma]_{\alpha,q',p'}[\omega]_{A_1}\sum_{k\in Z}\Big(\int_{\{2^{k-1}< \mathbb E^{\sigma}_i(f)\leq2^k\}}f^p\sigma d\mu\Big)^\theta\\
&\leq&[\omega, \sigma]_{\alpha,q',p'}[\omega]_{A_1}\Big(\sum_{k\in Z}\int_{\{2^{k-1}< \mathbb E^{\sigma}_i(f)\leq2^k\}}f^p\sigma d\mu\Big)^\theta\\
&=&[\omega, \sigma]_{\alpha,q',p'}[\omega]_{A_1}\|f\|^{p\theta}_{L^p(\sigma)}.\ee
Similarly, we obtain
\be
\sum\limits_{j\geq i}\int_{\Omega}\chi_{G_j}\alpha_j\mathbb E_j(f\sigma)\mathbb E_j(g\omega)d\mu\lesssim[\sigma,\omega]_{\alpha,p,q}[\sigma]_{A_1}\|g\|^{q'\theta}_{L^{q'}(\omega)}.\ee
This completes the proof of Theorem \ref{Th-posi}.
\end{proof}

\begin{proof}[Proof of Corollary \ref{cro-posi}] We change \eqref{change} to
\begin{equation}\label{changeto}F_j:=\{\mathbb E^{\omega}_j(g)^{q'}\leq \mathbb E^{\omega}_j(f)^{p}\}\hbox{ and }G_j:= \Omega\setminus F_j.\end{equation}
The proof of Corollary \ref{cro-posi} is similar to that of Theorem \ref{Th-posi}, and we omit the details.
\end{proof}

Now we intend to prove two-weight weak type inequality.

\begin{proof}[Proof of Theorem \ref{Th-w-posi}] $\eqref{thm i}\Rightarrow\eqref{thm ii}$ Note that $\omega\in L_{\mathcal{F}^0}^1.$
It follows from duality for Lorentz spaces that
$$\|T(f\omega)\|_{L^{p'}(\sigma)}\leq \|T\|\|f\|_{L^{q',1}(\omega)}.$$
Fix $E\in \mathcal{F}_i^0, i\in \mathbb Z.$ For $f=\chi_E,$ we have
$$\Big(\int_E\Big(\sum_{j\geq i}\alpha_j\mathbb E_j(\omega)\Big)^{p'}\sigma d\mu\Big)^{\frac{1}{p'}}\leq\|T(f\omega)\|_{L^{p'}(\sigma)}\leq \|T\|\|f\|_{L^{q',1}(\omega)}=\|T\|\omega(E)^{\frac{1}{q'}}.$$
Thus $[\sigma,\omega]_{\alpha,p,q}\leq\|T\|.$

$\eqref{thm ii}\Rightarrow\eqref{thm i}$ Fix $f\in L^p(\sigma)$ and $\lambda>0.$ We bound the set $\{T(f\sigma)>2\lambda\}.$
For $n\in \mathbb{Z},$ we denote $T_n(f\sigma)=\sum_{-\infty}^{j=n}\alpha_j\mathbb E_j(f\sigma)$ and
$T^n(f\sigma)=\sum^{\infty}_{j=n}\alpha_j\mathbb E_j(f\sigma).$
Let $$\tau=\inf\{n:T_n(f\sigma)>\lambda\}$$ and $\mathcal{Q}_{\lambda}=\{\{\tau=n\}:n\in \mathbb{Z}\}.$
For $n\in \mathbb{Z},$ we have $$\lambda\chi_{\{\tau=n\}} \geq T_{n-1}(f\sigma)\chi_{\{\tau=n\}}.$$
Then, $$\lambda\chi_{\{\tau=n\}\cap\{T(f\sigma)>2\lambda\}} \leq T^n(f\sigma)\chi_{\{\tau=n\}\cap\{T(f\sigma)>2\lambda\}}.$$

For $\eta\in(0,1)$ to be determined later, we denote
$$\mathcal{E}=\Big\{\{\tau=n\}: \omega(\{\tau=n\}\cap\{T(f\sigma)>2\lambda\})<\eta\omega(\{\tau=n\})\Big\}$$
and $\mathcal{F}=\mathcal{Q}_{\lambda}\backslash \mathcal{E}.$ It follows that
\begin{eqnarray*}
& &(2\lambda)^q\omega(\{T(f\sigma)>2\lambda\})\\
&\leq&\eta(2\lambda)^q\sum\limits_{\mathcal{E}}\omega(\{\tau=n\})
+2^q\lambda^q\eta^{-q}\sum\limits_{\mathcal{F}}\omega(\{\tau=n\})\Bigg(\frac{\omega(\{\tau=n\}\cap\{T(f\sigma)>2\lambda\})}{\omega(\{\tau=n\})}\Bigg)^q\\
&\leq&\eta(2\lambda)^q\sum\limits_{ \mathcal{E}}\omega(\{\tau=n\})
+2^q\eta^{-q}\sum\limits_{\mathcal{F}}\omega(\{\tau=n\})\Bigg(\frac{\int_{\{\tau=n\}}T^n(f\sigma)\omega d\mu}{\omega(\{\tau=n\})}\Bigg)^q.
\end{eqnarray*}
Note that
\begin{eqnarray*}
& &\sum\limits_{n\in \mathbb{Z}}\omega(\{\tau=n\})\Bigg(\frac{\int_{\{\tau=n\}}T^n(f\sigma)\omega d\mu}{\omega(\{\tau=n\})}\Bigg)^q\\
&=&\sum\limits_{n\in \mathbb{Z}}\Bigg(\int_{\{\tau=n\}}T^n(f\sigma)\omega d\mu\Bigg)^q\omega\Bigg(\{\tau=n\}\Bigg)^{1-q}\\
&=&\sum\limits_{n\in \mathbb{Z}}\Bigg(\int_{\{\tau=n\}}T^n(\omega\chi_{\{\tau=n\}})f\sigma d\mu\Bigg)^q\omega\Big(\{\tau=n\}\Big)^{1-q}.\end{eqnarray*}
Using H\"{o}lder's inequality, we obtain
\begin{eqnarray*}
& &\sum\limits_{n\in \mathbb{Z}}\omega(\{\tau=n\})\Bigg(\frac{\int_{\{\tau=n\}}T^n(f\sigma)\omega d\mu}{\omega(\{\tau=n\})}\Bigg)^q\\
&\leq&\sum\limits_{n\in \mathbb{Z}}\Bigg(\int_{\{\tau=n\}}T^n(\omega\chi_{\{\tau=n\}})^{p'}\sigma d\mu\Bigg)^{\frac{q}{p'}}\Bigg(\int_{\{\tau=n\}}|f|^p\sigma d\mu\Bigg)^{\frac{q}{p}}\omega(\{\tau=n\})^{1-q}\\
&=&\sum\limits_{n\in \mathbb{Z}}\Bigg(\Big(\int_{\{\tau=n\}}T^n(\omega\chi_{\{\tau=n\}})^{p'}\sigma d\mu\Big)^{\frac{1}{p'}}(\omega\{\tau=n\})^{-\frac{1}{q'}}\Bigg)^q\Big(\int_{\{\tau=n\}}|f|^p\sigma d\mu\Big)^{\frac{q}{p}}.
\end{eqnarray*}
In view of the condition \eqref{Test}, we have
\begin{eqnarray*}
\sum\limits_{n\in \mathbb{Z}}\omega(\{\tau=n\})\Bigg(\frac{\int_{\{\tau=n\}}T^n(f\sigma)\omega d\mu}{\omega(\{\tau=n\})}\Bigg)^q
&\leq&[\sigma,\omega]_{\alpha,p,q}^q\Bigg(\sum\limits_{n\in \mathbb{Z}}\int_{\{\tau=n\}}|f|^p\sigma d\mu\Bigg)^{\frac{q}{p}}\\
&=&[\sigma,\omega]_{\alpha,p,q}^q\Big(\int_{\Omega}|f|^p\sigma d\mu\Big)^{\frac{q}{p}}.
\end{eqnarray*}

Thus
$$\|T_{\alpha}(f\sigma)\|_{L^{q,\infty}(\omega)}\leq C(\eta)[\sigma,\omega]_{\alpha,p,q}\|f\|_{L^p(\sigma)},$$
where $C(\eta)=\frac{2}{(1-2^q\eta)^{\frac{1}{q}}\eta}.$ The function $C(\eta)$
attains its minimum for $\eta=\frac{q}{1+q}\frac{1}{2^q}$ and the minimum is equal to
$2^{q+1}\frac{1+q}{q}(1+q)^{\frac{1}{q}}.$ It follows that
$\|T\|\lesssim[\sigma,\omega]_{\alpha,p,q}.$
\end{proof}

\begin{proof}[Proof of Theorem \ref{thm-es}] Let $i\in \mathbb Z$ be arbitrarily chosen and fixed. By Lemma \ref{lemma_Sp},
we estimate $\int_{E} {^*M_i}(\sigma\chi_{E})^pv d\mu$ for any $E\in \mathcal{F}^0_i.$

Since
\be
\int_{E} {^*M_i}(\sigma)^pv d\mu=\int_{E} {^*M_i}(\sigma\chi_{E})^pv d\mu,\ee
it suffices to estimate $\int_{E} {^*M_i}(\sigma\chi_{E})^pv d\mu$ for $E=P_0\in \mathcal{F}^0_i,$
where $\mu(P_0)>0$ and, for some $k\in \mathbb Z,$
$P_0:=\{2^{k-1}< \mathbb E(\sigma|\mathcal{F}_i)\leq2^k\}.$

For the above $i,$ $P_0$ and $\sigma,$ we apply the construction of principal sets. We have
\be
\int_{P_0}{^*M_i}
(\sigma)^{p}v d\mu
&\leq&\sum\limits_{P\in \mathcal{P}}\int_{E(P)}{^*M_i}
(\sigma)^{p}v d\mu\\
&\lesssim&\sum\limits_{P\in \mathcal{P}}\int_{E(P)}2^{p({\mathcal{K}}_2(P)-1)}v d\mu\\
&\leq&\sum\limits_{P\in \mathcal{P}}\int_{P}2^{p({\mathcal{K}}_2(P)-1)}v d\mu.
\ee
{Proof of \eqref{Bound1}.} It follows from the definition of $B_p$ that
\be \int_{P}2^{p({\mathcal{K}}_2(P)-1)}v d\mu
&=&\int_{P}2^{p({\mathcal{K}}_2(P)-1)}\mathbb E(v|\mathcal{F}_{{\mathcal{K}}_1(P)})d\mu\\
&\leq&\int_{P}\mathbb E(\sigma|\mathcal{F}_{{\mathcal{K}}_1(P)})^p\mathbb E(v|\mathcal{F}_{{\mathcal{K}}_1(P)})d\mu\\
&\leq&[v,\omega]_{B_p}\int_{P}\exp(\mathbb E(\log\sigma|\mathcal{F}_{{\mathcal{K}}_1(P)}))d\mu.\ee
Note that
\be
\int_{P}\exp(\mathbb E(\log\sigma|\mathcal{F}_{{\mathcal{K}}_1(P)}))d\mu
&=&\int_{P}\exp(\mathbb E(\log(\sigma\chi_{P_0})|\mathcal{F}_{{\mathcal{K}}_1(P)}))d\mu\\
&=&\int_{P}\exp(\mathbb E(\log(\sigma\chi_{P_0})|\mathcal{F}_{{\mathcal{K}}_1(P)}))\chi_Pd\mu.\ee
In view of P.\ref{P.3} of the construction of principal sets, it follows that
\be
\int_{P}2^{p({\mathcal{K}}_2(P)-1)}v d\mu
&\leq&2[v,\omega]_{B_p}\int_{P}\exp(\mathbb E(\log(\sigma\chi_{P_0})|\mathcal{F}_{{\mathcal{K}}_1(P)}))\mathbb E(\chi_{E(P)}|\mathcal{F}_{{\mathcal{K}}_1(P)})d\mu\\
&=&2[v,\omega]_{B_p}\int_{P}\exp(\mathbb E(\log(\sigma\chi_{P_0})|\mathcal{F}_{{\mathcal{K}}_1(P)}))\chi_{E(P)}d\mu\\
&=&2[v,\omega]_{B_p}\int_{E(P)}\exp(\mathbb E(\log(\sigma\chi_{P_0})|\mathcal{F}_{{\mathcal{K}}_1(P)}))d\mu.\ee
Using Jensen's inequality for conditional expectation, for any $q>1,$ we have
$$\exp\Big(\mathbb E(\log(\sigma\chi_{P_0})|\mathcal{F}_{{\mathcal{K}}_1(P)})\Big)
\leq \mathbb E\Big((\sigma\chi_{P_0})^{\frac{1}{q}}|\mathcal{F}_{{\mathcal{K}}_1(P)}\Big)^q
\leq M((\sigma\chi_{P_0})^{\frac{1}{q}})^q.$$
Then
\be \int_{P_0}{^*M_i}(\sigma)^{p}v d\mu
&\lesssim&[v,\omega]_{B_p}\sum\limits_{P\in \mathcal{P}}\int_{E(P)}M((\sigma\chi_{P_0})^{\frac{1}{q}})^qd\mu.\\
&\leq&[v,\omega]_{B_p}\int_{P_0}M((\sigma\chi_{P_0})^{\frac{1}{q}})^qd\mu.\ee
Combining it with the boundedness of Doob's maximal operator, we deduce that
\be
\int_{P_0}{^*M_i}(\sigma)^{p}v d\mu
&\lesssim&[v,\omega]_{B_p}(q')^{q}\int_{P_0}\sigma d\mu.
\ee
Letting $q\rightarrow\infty,$ we obtain $(q')^{q}\rightarrow e.$ Thus
\be \int_{P_0}{^*M_i}(\sigma)^{p}v d\mu\lesssim[v,\omega]_{B_p}\sigma(P_0).
\ee

{Proof of \eqref{Bound2}.} It follows from the definition of $A_p$ that
\be \int_{P}2^{p({\mathcal{K}}_2(P)-1)}v d\mu
&=&\int_{P}2^{p({\mathcal{K}}_2(P)-1)}\mathbb E(v|\mathcal{F}_{{\mathcal{K}}_1(P)})d\mu\\
&\leq&\int_{P}\mathbb E(\sigma|\mathcal{F}_{{\mathcal{K}}_1(P)})^p\mathbb E(v|\mathcal{F}_{{\mathcal{K}}_1(P)})d\mu\\
&\leq&[v,\omega]_{A_p}\int_{P}\mathbb E(\sigma|\mathcal{F}_{{\mathcal{K}}_1(P)})d\mu.\ee
Note that $\int_{P}\mathbb E(\sigma|\mathcal{F}_{{\mathcal{K}}_1(P)})d\mu=\int_{P}\mathbb E(\sigma\chi_{P_0}|\mathcal{F}_{{\mathcal{K}}_1(P)})\chi_Pd\mu.$
In view of P.\ref{P.3} of the construction of principal sets, it follows that
\be
\int_{P}\mathbb E(\sigma\chi_{P_0}|\mathcal{F}_{{\mathcal{K}}_1(P)})\chi_Pd\mu
&\leq&2\int_{P}\mathbb E(\sigma\chi_{P_0}|\mathcal{F}_{{\mathcal{K}}_1(P)})\mathbb E(\chi_{E(P)}|\mathcal{F}_{{\mathcal{K}}_1(P)})d\mu\\
&=&2\int_{E(P)}\mathbb E(\sigma\chi_{P_0}|\mathcal{F}_{{\mathcal{K}}_1(P)})d\mu\\
&\leq&2\int_{E(P)}{^*M_{{\mathcal{K}}_1(P_0)}}(\sigma\chi_{P_0})d\mu.
\ee

Then
\be \int_{P_0}{^*M_i}(\sigma)^{p}v d\mu
&\lesssim&[v,\omega]_{A_p}\sum\limits_{P\in \mathcal{P}}\int_{E(P)}{^*M_{{\mathcal{K}}_1(P_0)}}(\sigma\chi_{P_0})d\mu.\\
&\leq&[v,\omega]_{A_p}\int_{P_0}{^*M_{{\mathcal{K}}_1(P_0)}}(\sigma\chi_{P_0})d\mu.
\ee
Because of $\sigma\in A^*_{\infty},$ we have
\be \int_{P_0}{^*M_i}(\sigma)^{p}v d\mu\lesssim[v,\omega]_{A_p}[\sigma]_{A^*_{\infty}}\sigma(P_0).\ee

{Proof of \eqref{Bound3}.}
For $a\in Z,$ define $$\mathcal{Q}^a=\{P\in\mathcal{P}:2^{a-1}
<\mathop{\hbox{esssup}}\limits_{P}(\mathbb E(\omega|\mathcal{F}_{{\mathcal{K}}_1(P)})\mathbb E(\sigma|\mathcal{F}_{{\mathcal{K}}_1(P)})^{p-1})\leq2^a\}.$$
It follows from H\"{o}lder's inequality that $1= \mathbb E_j(\omega^{\frac{1}{p}}\omega^{-\frac{1}{p}})^p\leq \mathbb E_j(\omega)\mathbb E_j(\sigma)^{p-1}\leq [\omega]_{A_p}, $ for any $j\in \mathbb Z.$ Set $K=[\log_2[\omega]_{A_p}]+1,$ we have
$$\mathcal{P}=\bigcup\limits_{a=0}^{K}\mathcal{Q}^a.$$
Then
 \be
\sum\limits_{P\in \mathcal{P}}\int_{P}2^{p({\mathcal{K}}_2(P)-1)}\omega d\mu
&=&\sum\limits_{P\in \mathcal{P}}\int_{P}2^{p({\mathcal{K}}_2(P)-1)}\mathbb E(\omega|\mathcal{F}_{{\mathcal{K}}_1(P)})d\mu\\
&\leq&\sum\limits_{P\in \mathcal{P}}\int_{P}\mathbb E(\sigma|\mathcal{F}_{{\mathcal{K}}_1(P)})^p\mathbb E(\omega|\mathcal{F}_{{\mathcal{K}}_1(P)})d\mu.\ee
Note that
\be&&\Big(\mathbb E(\sigma|\mathcal{F}_{{\mathcal{K}}_1(P)})^p\mathbb E(\omega|\mathcal{F}_{{\mathcal{K}}_1(P)})\Big)\chi_P\\
&\leq&\mathop{\hbox{esssup}}\limits_{P}(\mathbb E(\sigma|\mathcal{F}_{{\mathcal{K}}_1(P)})^{p-1}\mathbb E(\omega|\mathcal{F}_{{\mathcal{K}}_1(P)})\chi_P)
\mathop{\hbox{esssup}}\limits_{P}\mathbb E(\sigma|\mathcal{F}_{{\mathcal{K}}_1(P)}\chi_P).\ee
It follows that
\be
\sum\limits_{P\in \mathcal{P}}\int_{P}2^{p({\mathcal{K}}_2(P)-1)}\omega d\mu
&\leq&\sum\limits_{a=0}^{K}2^a\sum\limits_{P\in \mathcal{Q}^a}\int_{P}
\mathop{\hbox{esssup}}\limits_{P}\mathbb E(\sigma|\mathcal{F}_{{\mathcal{K}}_1(P)})d\mu\\
&\leq&2\sum\limits_{a=0}^{K}2^a\sum\limits_{P\in \mathcal{Q}^a}\int_{P}\mathbb E(\sigma|\mathcal{F}_{{\mathcal{K}}_1(P)})d\mu\\
&=&2\sum\limits_{a=0}^{K}2^a\sum\limits_{P\in \mathcal{Q}^a}\int_{P}\sigma d\mu.\ee
Let $\mathcal{Q}^a_{\max}$ be the collection of maximal sets\footnote{Let $\mathcal{Q}\subset\mathcal{P}.$ In view of Zorn's Lemma, for $\mathcal{Q}$ ordered by containment, we have
that $\mathcal{Q}$ contains at least one maximal element. Then, we denote the collection of maximal elements in $\mathcal{Q}$ by $\mathcal{Q}_{\max}.$} in $\mathcal{Q}^a,$ we obtain
\be
\sum\limits_{P\in \mathcal{Q}^a}\int_{P}\sigma d\mu
=\sum\limits_{Q\in \mathcal{Q}_{\max}^a}\sum\limits_{P\in \mathcal{Q}^a,P\subseteq Q}\int_{P}\sigma d\mu
=\sum\limits_{Q\in \mathcal{Q}_{\max}^a}\sum\limits_{P\in \mathcal{Q}^a,P\subseteq Q}\int_{P}\sigma \chi_Pd\mu.\ee
In view of P.\ref{P.3} of the construction of principal sets, it follows that
\be\sum\limits_{P\in \mathcal{Q}^a}\int_{P}\sigma d\mu
&\leq&2\sum\limits_{Q\in \mathcal{Q}_{\max}^a}\sum\limits_{P\in \mathcal{Q}^a,P\subseteq Q}\int_{P}\sigma \mathbb E(\chi_{E(P)}|\mathcal{F}_{{\mathcal{K}}_1(P)})d\mu\\
&=&2\sum\limits_{Q\in \mathcal{Q}_{\max}^a}\sum\limits_{P\in \mathcal{Q}^a,P\subseteq Q}\int_{P} \mathbb E(\sigma|\mathcal{F}_{{\mathcal{K}}_1(P)})\chi_{E(P)}d\mu.\ee
Because of $\int_{P} \mathbb E(\sigma |\mathcal{F}_{{\mathcal{K}}_1(P)})\chi_{E(P)}d\mu=\int_{E(P)}\mathbb E(\sigma\chi_{Q} |\mathcal{F}_{{\mathcal{K}}_1(P)})d\mu,$
we have
\be\sum\limits_{P\in \mathcal{Q}^a}\int_{P}\sigma d\mu
&\leq&2\sum\limits_{Q\in \mathcal{Q}_{\max}^a}\sum\limits_{P\in \mathcal{Q}^a,P\subseteq Q}\int_{E(P)} {^*M_{{\mathcal{K}}_1(Q)}}(\sigma\chi_{Q})d\mu\\
&\leq&2\sum\limits_{Q\in \mathcal{Q}_{\max}^a}\int_{Q} {^*M_{{\mathcal{K}}_1(Q)}}(\sigma\chi_{Q})d\mu.\ee

Then
\be\int_{P_0}{^*M_i}(\sigma)^{p}\omega d\mu
&\lesssim& \sum\limits_{a=0}^{K}2^a\sum\limits_{Q\in \mathcal{Q}_{\max}^a}\int_{Q} {^*M_{{\mathcal{K}}_1(Q)}}(\sigma\chi_{Q})d\mu\\
&\lesssim& \sum\limits_{a=0}^{K}\sum\limits_{Q\in \mathcal{Q}_{\max}^a}
\mathop{\hbox{esssup}}\limits_{Q}\Big(\mathbb E(\omega|\mathcal{F}_{{\mathcal{K}}_1(Q)})\mathbb E(\sigma|\mathcal{F}_{{\mathcal{K}}_1(Q)})^{p-1}\Big)\int_{Q} {^*M_{{\mathcal{K}}_1(Q)}}(\sigma\chi_{Q})d\mu.\ee
By \eqref{mixcon1} the definition of $(A_{p'})^{\frac{1}{p'}}(A^*_{\infty})^{\frac{1}{p}},$ we have
\be\int_{P_0}{^*M_i}(\sigma)^{p}\omega d\mu
&\lesssim& [\sigma]^p_{(A_{p'})^{\frac{1}{p'}}(A^*_{\infty})^{\frac{1}{p}}}\sum\limits_{a=0}^{K}\sum\limits_{Q\in \mathcal{Q}_{\max}^a}
\int_{Q} \sigma d\mu\\
&\leq& [\sigma]^p_{(A_{p'})^{\frac{1}{p'}}(A^*_{\infty})^{\frac{1}{p}}}\sum\limits_{a=0}^{K}\int_{P_0} \sigma d\mu\\
&=& [\sigma]^p_{(A_{p'})^{\frac{1}{p'}}(A^*_{\infty})^{\frac{1}{p}}}(K+1)\int_{P_0} \sigma d\mu.\ee
Thus
\be
\int_{P_0}{^*M_i}(\sigma)^{p}\omega d\mu
&\lesssim& [\sigma]^p_{(A_{p'})^{\frac{1}{p'}}(A^*_{\infty})^{\frac{1}{p}}}(3+\log_2[\omega]_{A_p})\int_{P_0} \sigma d\mu\\
&\lesssim& [\sigma]^p_{(A_{p'})^{\frac{1}{p'}}(A^*_{\infty})^{\frac{1}{p}}}(1+\log_2[\omega]_{A_p})\int_{P_0} \sigma d\mu.
\ee
The proof of Theorem \ref{thm-es} is complete.
\end{proof}

\section{Acknowledgement}
We thank the referee for many valuable comments and suggestions. These greatly improve the presentation of our results. 
The article was revised during W. Chen's visit to the Georgia Tech Mathematics Department, whose hospitality is gratefully
acknowledged.

%
%

\end{document}